\documentclass{amsart}
\usepackage{amssymb,mathtools,dsfont,nicefrac}
\usepackage[all]{xy}
\usepackage{hyperref}
\usepackage[lite]{amsrefs}
\usepackage{enumitem}
\usepackage{microtype}

\newcommand*{\MRref}[2]{ \href{http://www.ams.org/mathscinet-getitem?mr=#1}{MR \textbf{#1}}}
\newcommand*{\arxiv}[1]{\href{http://www.arxiv.org/abs/#1}{arXiv: #1}}

\newtheorem{theorem}{Theorem}[section]

\newtheorem{lemma}[theorem]{Lemma}
\newtheorem{proposition}[theorem]{Proposition}

\theoremstyle{remark}
\newtheorem{example}[theorem]{Example}

\theoremstyle{definition}
\newtheorem{definition}[theorem]{Definition}

\DeclareMathOperator{\Smooth}{S}
\DeclareMathOperator{\Rough}{R}


\DeclareMathOperator{\Hom}{Hom}

\newcommand*{\nb}{\nobreakdash}
\newcommand*{\blank}{\textup{\textvisiblespace}}
\newcommand*{\defeq}{\mathrel{\vcentcolon=}}
\newcommand*{\eqdef}{\mathrel{=\vcentcolon}}
\newcommand*{\congto}{\xrightarrow\cong}
\newcommand*{\prot}{\mathbin{\hat\otimes_\pi}}
\newcommand*{\hot}{\mathbin{\hat\otimes}}

\newcommand*{\ihom}[3][]{#2\mathbin{\Rightarrow}_{#1}#3}
\newcommand*{\ihomr}[3][]{#2\mathbin{\Leftarrow}_{#1}#3}


\newcommand*{\N}{\mathbb N}
\newcommand*{\Z}{\mathbb Z}
\newcommand*{\C}{\mathbb C}
\newcommand*{\Mat}{\mathbb M}
\newcommand*{\Unit}{\mathds 1}

\newcommand*{\g}{\mathfrak g}
\newcommand*{\Leftm}{\mathcal M_\textup l}
\newcommand*{\Rightm}{\mathcal M_\textup r}

\newcommand*{\Cinf}[1][\infty]{\textup C^{#1}}
\newcommand*{\Ccinf}[1][\infty]{\textup C_\textup c^{#1}}
\newcommand*{\Rdk}{\mathcal K}
\newcommand*{\Sch}{\mathcal S}
\newcommand*{\Univ}{\textup U}

\newcommand*{\Cat}{\mathcal C}
\newcommand*{\Born}{\mathfrak{Bor}}
\newcommand*{\Top}{\mathfrak{Tvs}}
\newcommand*{\Indban}{\overrightarrow{\mathfrak{Ban}}}
\newcommand*{\Mod}[1]{\mathfrak{Mod}_{#1}}

\newcommand*{\Id}{\textup{Id}}
\newcommand*{\ev}{\textup{ev}}
\newcommand*{\diff}{\textup d}
\newcommand*{\op}{\textup{op}}


\hyphenation{mon-oidal Wo-dzi-cki}

\begin{document}

\title[Smooth and rough modules over self-induced algebras]{Smooth and rough modules\\over self-induced algebras}
\author{Ralf Meyer}
\email{rameyer@uni-math.gwdg.de}
\address{Mathematisches Institut and Courant Centre ``Higher order structures''\\
  Georg-August Universit\"at G\"ottingen\\
  Bunsenstra{\ss}e 3--5\\
  37073 G\"ottingen\\
  Germany}
\subjclass[2000]{46H25, 16D90}

\begin{abstract}
  A non-unital algebra in a closed monoidal category is
  called self-induced if the multiplication induces an
  isomorphism \(A\otimes_A A \cong A\).  For such an algebra,
  we define smoothening and roughening functors that retract
  the category of modules onto two equivalent subcategories of
  smooth and rough modules, respectively.  These functors
  generalise previous constructions for group representations
  on bornological vector spaces.  We also study the pairs of
  adjoint functors between categories of smooth and rough
  modules that are induced by bimodules and Morita equivalence.
\end{abstract}
\thanks{Supported by the German Research Foundation (Deutsche Forschungsgemeinschaft (DFG)) through the Institutional Strategy of the University of G\"ottingen}
\maketitle

\section{Introduction}
\label{sec:intro}

Many algebras that are considered in non-commutative geometry
are non-unital.  Typical examples are the convolution algebra
\(\Ccinf(G)\) of smooth compactly supported functions on a
locally compact group~\(G\) (see~\cite{Meyer:Smooth}) or the
algebras \(\Mat_\infty\) and~\(\Rdk\) of finite matrices and of
infinite matrices with rapidly decreasing entries
(see~\cite{Cuntz-Meyer-Rosenberg}).  Both algebras carry
additional structure: both \(\Ccinf(G)\) and~\(\Rdk\) are
complete convex bornological algebras.  We may also
view~\(\Rdk\) as a Fr\'echet algebra, but this structure is
less relevant here.

When dealing with non-unital algebras, the usual unitality
condition for modules makes no sense.  But simply dropping this
condition would give too many modules.  On the one hand, the
bornological algebras \(\Mat_\infty\) and~\(\Rdk\) are Morita
equivalent to~\(\C\), so that we expect an equivalence of
module categories.  On the other hand, the categories of
non-unital (bornological) modules over \(\Mat_\infty\)
and~\(\Rdk\) are not equivalent to the category of
\(\C\)\nb-modules.

This article grew out of the
manuscript~\cite{Meyer:Embed_derived}, which will not be
published any more because there are too many small things that
I want changed.  One of them is that,
while~\cite{Meyer:Embed_derived} only considers bornological
algebras, it is sometimes necessary to consider other
categories instead of bornological vector spaces, such as the
category of inductive systems of Banach spaces
(see~\cite{Meyer:HLHA}).  Therefore, we discuss smoothening and
roughening functors and the functors induced by bimodules in
much greater generality here.  We work with algebras in an
arbitrary monoidal category, and we replace the quasi-unitality
assumption in~\cite{Meyer:Embed_derived} by the much weaker
assumption of being self-induced.  A monoidal category is an
additive category~\(\Cat\) with an associative tensor product
functor~\(\otimes\) and a tensor unit~\(\Unit\) that satisfies
suitable coherence laws (see \cites{MacLane:Associativity,
  Saavedra:Tannakiennes}).

Following Niels
Gr{\o}nb{\ae}k~\cite{Gronbaek:Morita_self-induced}, we call an
algebra~\(A\) in such a tensor category \emph{self-induced} if
the multiplication map \(A\otimes A\to A\) induces an
isomorphism \(A\otimes_A A \cong A\).  If~\(A\) is
self-induced, Gr{\o}nb{\ae}k calls a left \(A\)\nb-module~\(X\)
\emph{\(A\)\nb-induced} if the multiplication map \(A\otimes
X\to X\) induces an isomorphism \(A\otimes_A X \cong X\).

For instance, let~\(\Cat\) be the symmetric monoidal category
of complete convex bornological vector spaces with the complete
projective bornological tensor product and let \(A= \Ccinf(G)\)
be the convolution algebra of smooth functions with compact
support on a locally compact group~\(G\) (in the sense of
Fran\c{c}ois Bruhat~\cite{Bruhat:Distributions}), viewed as an
algebra in~\(\Cat\).  Then~\(A\) is self-induced, and the
category of \(A\)\nb-induced modules is isomorphic to the
category of smooth representations of~\(G\) on complete convex
bornological vector spaces (see~\cite{Meyer:Smooth}).  Therefore,
we call \(A\)\nb-induced modules \emph{smooth}.

An \(A\)\nb-module~\(X\) over a self-induced algebra~\(A\) is
called \emph{rough} if the adjoint \(X\to (\ihom AX)\) of the
module multiplication map \(A\otimes X\to X\) induces an
isomorphism \(X\cong \ihom[A]AX\).  Here \((\ihom AX) =
\Hom(A,X)\) denotes the internal Hom functor and \(\ihom[A]AX =
\Hom_A(A,X)\) denotes the subfunctor of ``\(A\)\nb-linear
maps.''  The existence of such internal Hom functors is the
defining property of a \emph{closed} monoidal category.  In the
category of complete convex bornological vector spaces, \(\ihom[A]AX\) is the
space of bounded \(A\)\nb-module homomorphisms \(A\to X\).

Rough and smooth modules and smoothening and roughening functors for
group convolution algebras are already studied in~\cite{Meyer:Smooth}.
Here we extend some of the properties observed in~\cite{Meyer:Smooth}
to the general setting explained above.  The smoothening and
the roughening of a module~\(X\) are defined by
\[
\Smooth(A) \defeq A\otimes_A X
\qquad\text{and}\qquad
\Rough(A) \defeq \ihom[A]AX,
\]
respectively.  As the name suggests, these \(A\)\nb-modules are
smooth and rough, respectively.  There are natural maps
\(\Smooth(X)\to X\to\Rough(X)\), the first is an isomorphism if
and only if~\(X\) is smooth, the second if and only if~\(X\) is
rough.  Thus \(\Smooth\) and~\(\Rough\) are retractions from
the category of all modules onto the subcategories of smooth
and rough modules, respectively.  We show also that~\(\Smooth\)
is the right adjoint of the embedding of the subcategory of
smooth modules, while~\(\Rough\) is left adjoint to the
embedding of the subcategory of rough modules.  And~\(\Rough\)
is right adjoint to~\(\Smooth\).  Finally,
\(\Smooth\circ\Rough=\Smooth\) and
\(\Rough\circ\Smooth=\Rough\), so that the functors \(\Smooth\)
and~\(\Rough\) provide an equivalence of categories between the
categories of smooth and rough \(A\)\nb-modules.

This is useful when we want to turn bimodules into functors
between categories of smooth or rough modules.  Of course, an
\(A,B\)-bimodule~\(M\) induces a functor \(X\mapsto M\otimes_B
X\) from left \(B\)- to left \(A\)\nb-modules.  If~\(M\) is
smooth as a left \(A\)\nb-module, this maps smooth modules
again to smooth modules.  The functor \(Y\mapsto \ihom[B]MY\)
in the opposite direction is defined between the categories of
rough modules, that is, \(\ihom[A]MY\) is a rough
\(B\)\nb-module if~\(M\) is a smooth \(B\)\nb-module.  Using
the smoothening functor, we may turn this into a functor
between categories of smooth modules as well.  The resulting
functor \(Y\mapsto \Smooth(\ihom[A]MY)\) is right adjoint to
the functor \(X\mapsto M\otimes_B X\) in the opposite
direction.  In particular, the functor \(X\mapsto M\otimes_B
X\) between smooth module categories always has a right adjoint
functor.

An algebra homomorphism \(f\colon A\to B\) allows us to
view~\(B\) as an \(A,B\)-bimodule or as a \(B,A\)-bimodule.
These two bimodules provide two pairs of adjoint functors
between the categories of smooth modules over \(A\) and~\(B\).

As an example of our general theory, we consider the biprojective
algebras of the form \(W\otimes V\) associated to a sufficiently
non-degenerate map \(b\colon V\otimes W\to\Unit\).  This includes the
bornological algebras \(\Mat_\infty\) and~\(\Rdk\) of finite and
rapidly decreasing matrices.  This construction also provides examples
of self-induced bornological algebras where the canonical map
\(\Smooth(X) \to X\) is not always a monomorphism.  This should be
contrasted with \cite{Meyer:Smooth}*{Lemma 4.4}, which asserts that
this map is always injective provided~\(A\) is a bornological algebra
with an approximate identity in a suitable sense.

We also consider the functors that relate Lie group and Lie algebra
representations for a Lie group~\(G\).  Let \(\Univ(\g)\) be the universal
enveloping algebra of the Lie algebra~\(\g\) of~\(G\).  Thus the
category of unital \(\Univ(\g)\)-modules is equivalent to the category of
Lie algebra representations of~\(\g\).  We may view \(\Ccinf(G)\) as a
\(\Ccinf(G),\Univ(\g)\)- or \(\Univ(\g),\Ccinf(G)\)-bimodule.  This provides
two functors from smooth representations of~\(G\) to Lie algebra
representations of~\(\g\).  The first equips a smooth representation
with the induced representation of~\(\g\), the second takes the
induced representation of~\(\g\) on the roughening.  In the opposite
direction, we get two functors that integrate representations of~\(\g\) to
smooth representations of~\(G\).

\section{Preliminaries}
\label{sec:preliminaries}

Additive monoidal categories provide the categorical framework
to define algebras and modules.  In the same generality, we may
define self-induced algebras and smooth modules.  We need a
\emph{closed} monoidal category, that is, an internal Hom
functor, to define rough modules as well.  Here we briefly
recall these basic category theoretic definitions.  Then we
turn the categories of Banach spaces, of complete convex
bornological vector spaces, and of inductive systems of Banach
spaces into closed monoidal categories.  We also discuss the
monoidal category of complete locally convex topological vector
spaces and why it is not closed.

Readers who are only interested in bornological and topological
algebras need not read this section in detail because
everything we explain here is fairly obvious in those cases.
They mainly have to remember the functors \(X\otimes_A Y\) and
\(\ihom[A]XY\) described concretely in
Example~\ref{exa:balanced_tensor_Born} and the basic
adjointness relation~\eqref{eq:adjoint_tensor_hom}.

A monoidal category is a category~\(\Cat\) with a bifunctor
\(\otimes\colon \Cat\times\Cat\to\Cat\) called \emph{tensor
  product} and an object~\(\Unit\) called (tensor) \emph{unit},
and natural isomorphisms
\[
\alpha_{A,B,C}\colon (A\otimes B)\otimes C\congto A\otimes
(B\otimes C),\qquad
\lambda_A\colon \Unit\otimes A\congto A,\quad
\rho_A\colon A\otimes\Unit\congto A,
\]
called \emph{associator}, \emph{left unitor} and \emph{right
  unitor}, subject to two coherence conditions: for all objects
\(A\), \(B\), \(C\) and~\(D\) in~\(\Cat\), the pentagon diagram
\[
\xymatrix@C+1.9em{
  \bigl((A\otimes B)\otimes C\bigr) \otimes D
  \ar[d]_{\alpha_{A\otimes B,C, D}}
  \ar[r]^{\alpha_{A,B,C}\otimes D}&
  \bigl(A\otimes (B\otimes C)\bigr) \otimes D
  \ar[r]^{\alpha_{A,B\otimes C, D}}&
  A\otimes \bigl((B\otimes C) \otimes D\bigr)
  \ar[d]^{A\otimes \alpha_{B, C, D}}\\
  (A\otimes B)\otimes (C\otimes D)
  \ar[rr]_{\alpha_{A,B,C\otimes D}}&&
  A\otimes \bigl(B\otimes (C \otimes D)\bigr)
}
\]
commutes, and for all objects \(A\), \(B\) and~\(C\)
of~\(\Cat\), the diagram
\[
\xymatrix@C-2em{
  (A\otimes\Unit)\otimes B \ar[rr]^{\alpha_{A,\Unit,B}}
  \ar[dr]_{\rho_A\otimes B}&&
  A\otimes(\Unit\otimes B) \ar[dl]^{A\otimes\lambda_B}\\
  &A\otimes B
}
\]
commutes.  By Mac Lane's Coherence
Theorem~\cite{MacLane:Associativity}, these two coherence
conditions imply that any diagram constructed using only
associators and unitors commutes.

A \emph{braided monoidal category}~\cite{Joyal-Street:Braided}
is a monoidal category together with \emph{braiding automorphisms}
\(\gamma_{A,B}\colon A\otimes B\to B\otimes A\) that are
compatible with the associators in the sense that the following
hexagons commute:
\[
\xymatrix@C+1em{
  &A\otimes(B\otimes C) \ar[r]^{\gamma_{A,B\otimes C}}&
  (B\otimes C)\otimes A \ar[dr]^{\alpha_{B,C,A}}&\\
  (A\otimes B)\otimes C\ar[ur]^{\alpha_{A,B,C}}
  \ar[dr]_{\gamma_{A,B}\otimes C}&&&B\otimes(C\otimes A)\\
  &(B\otimes A)\otimes C\ar[r]_{\alpha_{B,A,C}}&
  B\otimes (A\otimes C)\ar[ur]_{B\otimes\gamma_{A,C}}
}
\]
\[
\xymatrix@C+1em{
  &(A\otimes B)\otimes C \ar[r]^{\gamma_{A\otimes B,C}}&
  C\otimes(A\otimes B) \ar[dr]^{\alpha^{-1}_{C,A,B}}&\\
  A\otimes (B\otimes C)\ar[ur]^{\alpha^{-1}_{A,B,C}}
  \ar[dr]_{A\otimes\gamma_{B,C}}&&&(C\otimes A)\otimes B.\\
  &A\otimes(C\otimes B)\ar[r]_{\alpha^{-1}_{A,C,B}}&
  (A\otimes C)\otimes B\ar[ur]_{\gamma_{A,C}\otimes B}
}
\]
This implies compatibility with unitors, that is, a commuting
diagram
\[
\xymatrix@C-2em{
  A\otimes\Unit \ar[dr]_{\rho_A} \ar[rr]^{\gamma_{A,\Unit}}&&
  \Unit\otimes A \ar[dl]^{\lambda_A}\\
  &A.
}
\]
A \emph{symmetric monoidal category} is a braided monoidal
category that, in addition, satisfies
\(\gamma_{A,B}\gamma_{B,A} = \Id_{A\otimes B}\) for all objects
\(A\) and~\(B\).

An additive (braided) monoidal category is a category that is
at the same time additive and (braided) monoidal, and such that
the bifunctor~\(\otimes\) is additive.  We will only consider
additive monoidal categories.

\begin{example}
  \label{exa:monoidal_Abelian}
  The basic example of an additive symmetric monoidal category is
  the category of Abelian groups with the usual tensor product,
  \(\Unit=\Z\), and the obvious associator, unitors, and brading.
\end{example}

\begin{example}
  \label{exa:monoidal_Born}
  Let~\(\Born\) be the category of complete convex bornological
  vector spaces.  Let~\(\otimes\) be the complete projective
  bornological tensor product, usually denoted~\(\hot\), and
  let~\(\Unit\) be~\(\C\), assuming we are dealing with complex
  vector spaces.  Then the obvious associator, unitors, and
  braiding on the algebraic tensor product induce maps on the
  completions and provide the corresponding data in~\(\Born\).
  This turns~\(\Born\) into a symmetric monoidal category.

  Our examples will all be in this symmetric monoidal category.
\end{example}

\begin{example}
  \label{exa:monoidal_Indban}
  Let~\(\Indban\) be the category of inductive systems of
  Banach spaces.  The projective Banach space tensor product
  has a unique extension~\(\otimes\) to~\(\Indban\) that
  commutes with inductive limits.  Let~\(\Unit\) be~\(\C\),
  assuming we are dealing with complex vector spaces.  There
  are an obvious associator, unitors, and braiding that turn
  this into a symmetric monoidal category.  We refer
  to~\cite{Meyer:HLHA} for more details and an explanation why
  it is useful to replace~\(\Born\) by~\(\Indban\).
\end{example}

\begin{example}
  \label{exa:monoidal_Top}
  Let~\(\Top\) be the category of complete locally convex
  topological vector spaces.  Let~\(\otimes\) be the complete
  projective topological tensor product, usually
  denoted~\(\prot\), and let~\(\Unit\) be~\(\C\), assuming we
  are dealing with complex vector spaces.  Then the obvious
  associators, unitors, and braidings on the algebraic tensor
  products extend to the completions and provide the
  corresponding data in~\(\Top\).  This turns~\(\Top\) into a
  symmetric monoidal category.
\end{example}

\begin{definition}
  \label{def:closed_smc}
  A monoidal category~\(\Cat\) is called (left) \emph{closed}
  if the tensor product functor \(B\mapsto A\otimes B\) has a
  right adjoint for each object~\(A\).  In this case, the
  adjoints define a bifunctor \(\Cat^\op\times\Cat\to\Cat\),
  \((A,B)\mapsto \ihom AB\) with natural isomorphisms
  \begin{equation}
    \label{eq:define_ihom}
    \Cat(A\otimes B, C) \cong \Cat(B, \ihom AC)  
  \end{equation}
  for all objects \(A\), \(B\) and~\(C\) of~\(\Cat\).  The
  isomorphisms in~\eqref{eq:define_ihom} provide natural
  transformations \(\ev_{AB}\colon A\otimes (\ihom AB) \to B\),
  called \emph{evaluation map}, and \(B\to \ihom A{(A\otimes
    B)}\).
\end{definition}

\begin{example}
  \label{exa:monoidal_Born_closed}
  The symmetric monoidal category~\(\Born\) in
  Example~\ref{exa:monoidal_Born} is closed.  The internal Hom
  space \(\ihom AC\) is the space of bounded linear maps \(A\to
  C\) equipped with the bornology of equibounded sets of linear
  maps.  This bornology is complete and convex if~\(C\) is, and
  the defining isomorphism~\eqref{eq:define_ihom} is
  well-known.

  Banach spaces form an additive subcategory of~\(\Born\) that
  is closed both under \(\otimes\) and~\(\ihom{}{}\).  Hence
  they form a closed symmetric monoidal category in their own
  right.  The category of inductive systems \(\Indban\) is
  closed symmetric monoidal as well, see~\cite{Meyer:HLHA} for
  the construction of the internal Hom functor in \(\Indban\).
\end{example}

\begin{example}
  \label{exa:monoidal_Top_closed}
  The symmetric monoidal category~\(\Top\) in
  Example~\ref{exa:monoidal_Top} is not closed.  The complete
  projective topological tensor product functor cannot have a right
  adjoint because this would force it to commute with arbitrary
  colimits.  But it does not even commute with direct sums.

  The complete inductive tensor product
  of~\cite{Grothendieck:Produits} does commute with direct
  sums.  It is defined by a universal property for separately
  continuous bilinear maps.  But since separately continuous
  bilinear maps need not extend from dense subspaces, it seems
  likely that the \emph{completed} inductive tensor product is
  not associative in complete generality.  It is, therefore,
  unclear how to turn the category of all complete locally
  convex topological vector spaces into a closed monoidal category.
\end{example}

The internal Hom functor of a closed monoidal category comes with
several canonical maps (see also~\cite{Saavedra:Tannakiennes}).  The
most important ones are:
\begin{itemize}
\item a lifting of the adjointness
  isomorphism~\eqref{eq:define_ihom} to internal Homs:
  \begin{equation}
    \label{eq:define_ihom_lift}
    \ihom{(A\otimes B)}C \cong \ihom B{(\ihom AC)};    
  \end{equation}

\item the canonical \emph{composition map}
  \[
  (\ihom XY)\otimes (\ihom YZ) \to \ihom XZ,\qquad
  f\otimes g\mapsto g\circ f,
  \]
  for three objects \(X\), \(Y\) and~\(Z\), which is adjoint to
  the composition
  \[
  X\otimes\bigl((\ihom XY) \otimes (\ihom YZ)\bigr)
  \cong
  \bigl(X\otimes(\ihom XY)\bigr) \otimes (\ihom YZ) \to
  Y \otimes (\ihom YZ) \cong Z;
  \]
\item and the \emph{inflation map}
  \[
  \ihom XY \to \ihom{(Z\otimes X)}{(Z\otimes Y)},\qquad
  f\mapsto Z\otimes f = \Id_Z \otimes f,
  \]
  for three objects \(X\), \(Y\), and~\(Z\), which is adjoint
  to the map \((Z\otimes X)\otimes (\ihom XY) \cong Z\otimes
  \bigl(X\otimes (\ihom XY)\bigr) \to Z\otimes Y\).
\end{itemize}

Let~\(\Cat\) be an additive monoidal category.  An \emph{algebra}
in~\(\Cat\) is simply a semigroup object in~\(\Cat\), that is, an
object~\(A\) with a map \(\mu\colon A\otimes A\to A\) called
\emph{multiplication map}, such that the usual associativity diagram
\[
\xymatrix{
  (A\otimes A)\otimes A \ar[r]^{\alpha}\ar[d]_{\mu\otimes A}&
  A\otimes (A\otimes A) \ar[r]^-{\mu}&
  A\otimes A \ar[d]^{\mu}\\
  A\otimes A \ar[rr]_{\mu}&&
  A
}
\]
commutes.  A \emph{unital algebra} in~\(\Cat\) is a monoid
object in~\(\Cat\), that is, it is an algebra together with a
morphism \(\eta\colon \Unit\to A\) called \emph{unit} such that
the diagram
\[
\xymatrix{
  \Unit\otimes A \ar[r]^{\eta\otimes A} \ar[dr]_{\lambda_A}&
  A\otimes A \ar[d]^{\mu}&
  A\otimes\Unit \ar[l]_{A\otimes\eta} \ar[dl]^{\rho_A}\\
  &A
}
\]
commutes.  The usual trick shows that if an algebra has a left
and a right unit, then both coincide and provide a two-sided
unit.  In particular, the unit of an algebra is unique if it
exists.

Let \((A,\mu)\) be an algebra in~\(\Cat\).  A \emph{left
  \(A\)\nb-module} is an object~\(X\) of~\(\Cat\) with a map
\(\mu_X\colon A\otimes X\to X\), also called \emph{multiplication},
such that the usual associativity diagram commutes:
\[
\xymatrix{
  (A\otimes A)\otimes X \ar[r]^{\alpha}\ar[d]_{\mu\otimes A}&
  A\otimes (A\otimes X) \ar[r]^-{\mu_X}&
  A\otimes X \ar[d]^{\mu_X}\\
  A\otimes X \ar[rr]_{\mu}&&
  X
}
\]
A module over a unital algebra is \emph{unital} if the following
diagram commutes:
\[
\xymatrix{
  \Unit\otimes X \ar[r]^{\eta\otimes X} \ar[dr]_{\lambda_X}&
  A\otimes X \ar[d]^{\mu_X}\\
  &X
}
\]

Right modules and bimodules are defined similarly.  In a braided
monoidal category, any algebra~\(A\) has an opposite algebra~\(A^\op\)
with multiplication
\[
A\otimes A \xrightarrow{\gamma_{A,A}} A\otimes A
\xrightarrow{\mu} A,
\]
where \(\mu\) and~\(\gamma\) are the multiplication of~\(A\)
and the braiding.  The assumptions of a braided monoidal
category imply that this is again an algebra.

If \(\mu_X\colon X\otimes A\to X\) is a right \(A\)\nb-module
structure, then
\[
A\otimes X\xrightarrow{\gamma_{A,X}} X\otimes A
\xrightarrow{\mu_X} X
\]
is a left \(A^\op\)\nb-module structure on~\(X\), and vice
versa; once again we need the assumptions of a braided monoidal
category here.  Hence right \(A\)\nb-modules are equivalent to
left \(A^\op\)\nb-modules.  Thus there is no significant
difference between left and right modules in braided
monoidal categories.

\begin{example}
  \label{exa:algebras_modules}
  (Unital) algebras in the symmetric monoidal category of
  Abelian groups are (unital) rings, and (unital) modules over
  such algebras are (unital) modules over rings in the usual
  sense.

  (Unital) algebras in~\(\Top\) (Example~\ref{exa:monoidal_Top}) are
  complete locally convex topological (unital) algebras with jointly
  continuous multiplication \(A\times A\to A\).  (Unital) modules over
  them are complete locally convex topological (unital) modules with
  jointly continuous multiplication map \(A\times X\to X\).

  Similarly, (unital) algebras in~\(\Born\)
  (Example~\ref{exa:monoidal_Born}) are (unital) complete convex
  bornological algebras, and modules also have their usual
  meaning.
\end{example}

Next we define \(X\otimes_A Y\) and \(\ihom[A]XY\) for an
algebra~\(A\) and \(A\)\nb-modules \(X\) and~\(Y\).

\begin{definition}
  \label{def:balanced_tensor}
  Let~\(\Cat\) be a monoidal category in which each morphism has a
  cokernel.  Let~\(A\) be an algebra in~\(\Cat\), let~\(X\) be a right
  \(A\)\nb-module, and let~\(Y\) be a left \(A\)\nb-module, with
  multiplication maps \(\mu_X\colon X\otimes A\to X\) and
  \(\mu_Y\colon A\otimes Y\to Y\).  We define the \emph{balanced
    tensor product} \(X\otimes_A Y\) to be the cokernel of the map
  \[
  \mu_X\otimes Y - X\otimes\mu_Y\colon X\otimes A\otimes Y \to
  X\otimes Y.
  \]
  Roughly speaking, \(\mu_X\otimes Y - X\otimes\mu_A\) corresponds to
  the formula \(x\cdot a\otimes y- x\otimes a\cdot y\).
\end{definition}

\begin{definition}
  \label{def:balanced_Hom}
  Let~\(\Cat\) be a closed monoidal category in which each morphism
  has a kernel.  Let~\(A\) be an algebra in~\(\Cat\), and let \(X\)
  and~\(Y\) be left \(A\)\nb-modules with multiplication maps
  \(\mu_X\colon A\otimes X\to X\) and \(\mu_Y\colon A\otimes Y\to Y\).
  We define the \emph{balanced internal Hom} \(\ihom[A]XY\) to be
  the kernel of the map
  \[
  \ihom XY \to \ihom{(A\otimes X)}Y,\qquad
  f\mapsto f\circ\mu_X- \mu_Y\circ (A\otimes f).
  \]
  Roughly speaking, this corresponds to the map \(a\otimes x \mapsto
  f(a\cdot x) - a\cdot f(x)\).
\end{definition}

\begin{example}[\cite{Meyer:HLHA}]
  \label{exa:balanced_tensor_Born}
  If \(\Cat=\Born\), then \(X\otimes_A Y\) is the quotient of \(X\hot
  Y\) by the \emph{closed} linear span of \(xa\otimes y-x\otimes ay\)
  for \(x\in X\), \(a\in A\), \(y\in Y\).  Taking the closure ensures
  that the quotient is again separated, even complete.  And
  \(\ihom[A]XY\) is the space of bounded \(A\)\nb-module homomorphisms
  \(X\to Y\) with the equibounded bornology.
\end{example}

These constructions in general monoidal categories by and large have
the same formal properties as for rings and modules.  We will indicate
some of them now, see~\cite{Saavedra:Tannakiennes} for more details.

Let \(A\) and~\(B\) be algebras in~\(\Cat\), let~\(X\) be a
\(B,A\)-bimodule and~\(Y\) a left \(A\)\nb-module.  Assume that the
tensor product functor commutes with cokernels in both variables.
Then \(B\otimes (X\otimes_A Y)\) is the cokernel of the natural
map
\[
B\otimes \mu_X\otimes Y - B\otimes X\otimes \mu_Y\colon
B\otimes X\otimes A\otimes Y\to B\otimes X\otimes Y.
\]
Hence the multiplication map \(\mu_{BX}\otimes Y\colon B\otimes
X\otimes Y\to X\otimes Y\) descends to a map \(B\otimes (X\otimes_A Y)
\to X\otimes_A Y\), which turns \(X\otimes_A Y\) into a left
\(B\)\nb-module.  Similarly, an \(A,C\)\nb-module structure on~\(Y\)
induces a right \(C\)\nb-module structure on \(X\otimes_A Y\), and
if~\(X\) is a \(B,A\)-bimodule and~\(Y\) is an \(A,C\)-bimodule, then
\(X\otimes_A Y\) is a \(B,C\)-bimodule.

This allows us to form triple balanced tensor products \(X\otimes_A
Y\otimes_C Z\).  This is a \(B,D\)-bimodule if \(X\), \(Y\), and~\(Z\)
are bimodules over \(B,A\), \(A,C\), and \(C,D\) respectively.  More
precisely, we get two such bimodules, \((X\otimes_A Y)\otimes_C Z\)
and \(X\otimes_A (Y\otimes_C Z)\), which are related by a canonical
isomorphism that satisfies coherence laws similar to those
for~\(\otimes\).  Therefore, it is legitimate to drop brackets in such
tensor product expressions.

The internal Hom \(\ihom[A]XY\) inherits a \(B,C\)-bimodule
structure if~\(X\) is an \(A,B\)-bimodule and~\(Y\) is an
\(A,C\)-bimodule.  The \(B\)\nb-module structure in
\[
\Cat(B\otimes(\ihom[A]XY), \ihom[A]XY) \cong
\Cat_A(X\otimes B\otimes(\ihom[A]XY), Y)
\]
is adjoint to the \(A\)\nb-module map
\[
X\otimes B\otimes (\ihom[A]XY)
\xrightarrow{\mu_{XB}\otimes(\ihom[A]XY)}
X\otimes (\ihom[A]XY) \xrightarrow{\ev_{XY}}
Y,
\]
while the right \(C\)\nb-module structure in
\[
\Cat\bigl((\ihom[A]XY)\otimes C, \ihom[A]XY\bigr) \cong
\Cat_A(X\otimes(\ihom[A]XY)\otimes C, Y)
\]
is adjoint to the composition
\[
X\otimes (\ihom[A]XY)\otimes C
\xrightarrow{\ev_{XY}\otimes C}
Y\otimes C
\xrightarrow{\mu_{YC}}
Y.
\]
In examples, these definitions reproduce the usual bimodule structure
on spaces of linear maps, \(b\cdot f\cdot c(x) \defeq f(x\cdot b)\cdot
c\).  Routine diagram chases show that these maps define a
\(B,C\)\nb-bimodule structure.

The functors \(\otimes_A\) and \(\ihom[A]{}{}\) are related by
the expected adjointness relation:
\begin{equation}
  \label{eq:adjoint_tensor_hom}
  \Cat_{B,C}(X\otimes_A Y,Z) \cong \Cat_{A,C}(Y, \ihom[B]XZ),  
\end{equation}
where~\(X\) is a \(B,A\)-module, \(Y\) is an \(A,C\)-module, \(Z\) is
a \(B,C\)-module, and~\(\Cat_{B,C}\) denotes \(B,C\)\nb-module
homomorphisms.  Of course, we use the canonical bimodule structures on
\(X\otimes_A Y\) and \(\ihom[B]XZ\) here.  To
prove~\eqref{eq:adjoint_tensor_hom}, identify both sides with
subspaces of \(\Cat(X\otimes Y,Z) \cong \Cat(Y,\ihom XZ)\)
by~\eqref{eq:define_ihom} and check that the additional conditions
involving \(A\), \(B\) and~\(C\) correspond to each other.

\section{Self-induced algebras, smooth and rough modules}
\label{sec:self-induced}

From now on, we fix a closed monoidal category~\(\Cat\) with tensor
product functor~\(\otimes\), tensor unit~\(\Unit\), and internal Hom
functor~\(\ihom{}{}\).  We also assume that all morphisms in~\(\Cat\)
have a kernel and a cokernel, so that we may form the balanced tensor
product \(X\otimes_A Y\) and the balanced internal Hom \(\ihom[A]XY\).

Algebras and modules are understood to be algebras and modules
in~\(\Cat\).  Readers may think about the category
\(\Cat=\Born\) of complete convex bornological vector spaces
(Example~\ref{exa:monoidal_Born}), where our constructions
become much more concrete.  The functors \(X\otimes_A Y\) and
\(\ihom[A]XY\) in this case are described in
Example~\ref{exa:balanced_tensor_Born}.

Let~\(A\) be an algebra with multiplication \(\mu\colon
A\otimes A\to A\), and let~\(X\) be a left \(A\)\nb-module with
multiplication \(\mu_X\colon A\otimes X\to X\).  The
associativity relation
\[
\mu_X\circ (A\otimes\mu_X) = \mu_X\circ (\mu\otimes X)
\]
means that~\(\mu_X\) descends to a canonical map \(A\otimes_A X
\to X\) by the definition of the balanced tensor product.  We
denote this induced map by \(\bar\mu_X\colon A\otimes_A X\to
X\).  This is an \(A\)\nb-module homomorphism with respect to
the canonical \(A\)\nb-module structure on \(A\otimes_A X\)
by left multiplication.

In particular, for \(X=A\) the multiplication on~\(A\) induces
a map \(\bar\mu\colon A\otimes_A A \to A\).  This is an
\(A\)\nb-bimodule homomorphism with respect to the canonical
\(A\)\nb-bimodule structures on~\(A\) and \(A\otimes_A A\) from
left and right multiplication.

The associativity of~\(\mu_X\) also implies that~\(\bar\mu_X\)
is an \(A\)\nb-module homomorphism \(A\otimes_A X\to X\).
Hence the adjointness isomorphism
\[
\Cat_A(A\otimes_A X,X) \cong \Cat_A(X, \ihom[A]AX)
\]
in~\eqref{eq:adjoint_tensor_hom} turns~\(\bar\mu_X\) into an
\(A\)\nb-module homomorphism \(\bar\mu_X^\dagger\colon X \to
(\ihom[A] AX)\).

\begin{example}
  \label{exa:barmu_dagger}
  In the category of bornological vector spaces, \(\bar\mu_X\)
  is the map on the quotient \(A\otimes_A X\) of \(A\hot X\)
  induced by the map \(A\hot X\to X\), \(a\otimes x\mapsto
  a\cdot x\), and~\(\bar\mu_X^\dagger\) maps \(x\in X\) to the
  \(A\)\nb-module map \(A\to X\), \(a\mapsto a\cdot x\).
\end{example}

The natural maps \(\bar\mu\), \(\bar\mu_X\), and~\(\bar\mu_X^\dagger\)
are needed for our main definitions:

\begin{definition}
  \label{def:self-induced}
  An algebra~\(A\) is called \emph{self-induced}
  (see~\cite{Gronbaek:Morita_self-induced}) if the map
  \(\bar\mu\colon A\otimes_A A\to A\) is an isomorphism.
  Let~\(A\) be a self-induced algebra.

  A left \(A\)\nb-module~\(X\) is called \emph{smooth} if the
  map \(\bar\mu_X\colon A\otimes_A X\to X\) is an isomorphism, and
  \emph{rough} if the map \(\bar\mu_X^\dagger\colon X \to
  (\ihom[A]AX)\) is an isomorphism.
\end{definition}

We may define smoothness for right \(A\)\nb-modules by requiring the
analogous map \(X\otimes_A A\to X\) to be invertible.

To correctly define roughness for right modules, we must use
the \emph{right} internal Hom functor, that is, the right
adjoint to \(A\mapsto B\otimes A\).  In a braided monoidal
category, the right and left internal Hom functors are
naturally isomorphic; we may even view right \(A\)\nb-modules
as left \(A^\op\)\nb-modules, and~\(A\) is self-induced if and
only if~\(A^\op\) is self-induced.  Therefore, in the tensor
categories of greatest interest, there is no need for a
separate definition of roughness for right modules.  Since we
will not use rough right modules in the following, we do not
examine this technical issue any further here.

\begin{definition}
  \label{def:smoothening}
  Let~\(A\) be a self-induced algebra.  The \emph{smoothening} and
  \emph{roughening} of a left \(A\)\nb-module~\(X\) are defined by
  \[
  \Smooth_A(X) = \Smooth (X) \defeq A\otimes_A X,\qquad
  \Rough_A(X) = \Rough (X) \defeq \ihom[A]AX.
  \]
  This defines functors on the category of \(A\)\nb-modules.
  The maps \(\bar\mu_X\) and~\(\bar\mu^\dagger_X\) provide
  natural transformations \(\bar\mu_X\colon \Smooth(X)\to X\)
  and \(\bar\mu^\dagger_X\colon X\to\Rough(X)\).
\end{definition}

If~\(\Cat\) is the symmetric monoidal category of Banach spaces
with the projective Banach space tensor product, then the
self-induced algebras are exactly those of Niels
Gr{\o}nb{\ae}k~\cite{Gronbaek:Morita_self-induced}, and the
smooth modules are the \(A\)\nb-induced modules
of~\cite{Gronbaek:Morita_self-induced}.

Notice that we only defined smooth and rough modules and the
smoothening and roughening functors for a self-induced
algebra~\(A\).  If~\(A\) is self-induced, then~\(A\) is smooth
as an \(A\)\nb-bimodule.

Of course, self-induced algebras, smooth modules, and the smoothening
functor make sense without an internal Hom functor.  Thus we may still
speak of self-induced complete locally convex topological algebras,
smooth topological modules over them, and smoothenings of such modules
(Example~\ref{exa:monoidal_Top}).  But here we are mainly interested
in the interplay between smooth and rough modules.

\begin{example}
  \label{exa:smooth_rough_group}
  Let~\(\Cat\) be the category \(\Born\) of complete convex
  bornological vector spaces with the complete projective
  bornological tensor product
  (Example~\ref{exa:monoidal_Born}).  Fran\c{c}ois
  Bruhat~\cite{Bruhat:Distributions} used the
  Montgomery--Zippin structure theory for locally compact
  groups to define a space of smooth functions on~\(G\) for any
  locally compact group~\(G\).  For Lie groups, smoothness has
  the usual meaning, for totally disconnected groups, smooth
  functions are locally constant.  The smooth, compactly
  supported functions on~\(G\) form an algebra under
  convolution.  This is a complete convex bornological algebra
  \(\Ccinf(G)\), see~\cite{Meyer:Smooth} for the definition of
  the bornology.

  It is proved in~\cite{Meyer:Smooth} that the algebra
  \(\Ccinf(G)\) in~\(\Born\) is self-induced and that the
  category of smooth \(\Ccinf(G)\)-modules is isomorphic to the
  category of smooth representations of~\(G\) on complete
  convex bornological vector spaces.  Furthermore,
  \cite{Meyer:Smooth} introduces smoothening and roughening
  functors for \(\Ccinf(G)\).  These constructions
  in~\cite{Meyer:Smooth} are special cases of
  Definition~\ref{def:smoothening}.  In the following, we will
  generalise some of the results of~\cite{Meyer:Smooth} to
  arbitrary self-induced algebras.
\end{example}

\begin{proposition}
  \label{pro:unital_case}
  Let~\(A\) be a unital algebra.  Then~\(A\) is self-induced,
  and a left \(A\)\nb-module is smooth if and only if it is
  rough if and only if it is unital.

  Conversely, if~\(A\) is rough as a left \(A\)\nb-module,
  then~\(A\) is a unital algebra.
\end{proposition}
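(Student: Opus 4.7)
The plan splits the proposition into two directions. For the first assertion (``$A$ unital $\Rightarrow$ $A$ self-induced and smooth $\iff$ rough $\iff$ unital''), the strategy is to produce explicit inverses to $\bar\mu_X$ and $\bar\mu_X^\dagger$ in terms of the unit $\eta\colon \Unit \to A$. For the converse, the plan is to define a candidate unit from the preimage of $\Id_A\in\ihom[A]AA$ under~$\bar\mu^\dagger$ and then verify the two unit axioms separately.

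For the forward direction, fix a unital algebra $(A,\mu,\eta)$. Given a unital module~$X$, take $\sigma_X \defeq q\circ (\eta\otimes X)\circ \lambda_X^{-1}\colon X \to A\otimes_A X$, where $q\colon A\otimes X \to A\otimes_A X$ is the canonical quotient. Then $\bar\mu_X\circ\sigma_X = \mu_X\circ(\eta\otimes X)\circ \lambda_X^{-1}$ collapses to~$\Id_X$ by the module unit axiom, while the reverse composition $\sigma_X\circ \bar\mu_X \circ q = q\circ (\eta\otimes X)\circ \lambda_X^{-1}\circ \mu_X$ reduces to~$q$ after rewriting via the balanced-tensor identity $q\circ (\mu\otimes X) = q\circ (A\otimes \mu_X)$ and the algebra unit axiom $\mu\circ (\eta\otimes A)=\lambda_A$. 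For roughness, the corresponding inverse to $\bar\mu_X^\dagger$ is ``evaluation at~$\eta$'', namely $\varepsilon_X \defeq \ev\circ (\eta\otimes\iota)\circ \lambda^{-1}\colon \ihom[A]AX \to X$ with $\iota\colon \ihom[A]AX\hookrightarrow \ihom AX$ the inclusion; one composition uses unitality of~$X$, the other the defining $A$\nb-linearity of elements of $\ihom[A]AX$. Conversely, $A\otimes_A X$ is always a unital left $A$\nb-module (its action descends from $\mu\otimes X$ and satisfies the unit axiom by that of~$A$), and $\ihom[A]AX$ is always unital because its canonical $A$\nb-action is adjoint to precomposition with right multiplication on~$A$---unital since~$\eta$ is a right unit of~$A$. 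Transport through the $A$\nb-module isomorphism $\bar\mu_X$ or $\bar\mu_X^\dagger$ then yields~$X$ unital. Taking $X=A$ in this part also gives self-inducedness of~$A$.

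For the converse, suppose $\bar\mu^\dagger\colon A\to \ihom[A]AA$ is invertible. Let $\tilde\Id\colon \Unit \to \ihom[A]AA$ be the morphism adjoint to $\Id_A\colon A\to A$ (well defined because~$\Id_A$ is trivially $A$\nb-linear), and set $\eta \defeq (\bar\mu^\dagger)^{-1}\circ \tilde\Id$. Since $\bar\mu^\dagger$, viewed as landing in $\ihom AA$, is adjoint to~$\mu$, the equation $\bar\mu^\dagger\circ \eta = \tilde\Id$ becomes $\mu\circ (A\otimes\eta) = \rho_A$ under the adjunction, so~$\eta$ is a right unit. For the left unit, set $u \defeq \mu\circ(\eta\otimes A)\circ \lambda_A^{-1}\colon A\to A$ and compute the adjoint of $\bar\mu^\dagger\circ u$: it equals $\mu\circ (A\otimes\mu)\circ (A\otimes(\eta\otimes A))\circ (A\otimes \lambda_A^{-1})$, which after applying associativity of~$\mu$, the triangle identity in the form $\alpha^{-1}_{A,\Unit,A}\circ(A\otimes\lambda_A^{-1}) = \rho_A^{-1}\otimes A$, and the right unit axiom $\mu\circ(A\otimes\eta)=\rho_A$ just proven, collapses to~$\mu$---the same adjoint as~$\bar\mu^\dagger$. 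Hence $\bar\mu^\dagger\circ u = \bar\mu^\dagger$, and since $\bar\mu^\dagger$ is monic, $u=\Id_A$, i.e., $\mu\circ (\eta\otimes A) = \lambda_A$.

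The principal subtlety is the final computation: chasing the adjoint of $\bar\mu^\dagger\circ u$ requires coordinating the closed-monoidal adjunction with associativity of~$\mu$ and the triangle identity at once, which is easy to mishandle in the abstract monoidal setting. The forward direction is by contrast a straightforward verification once the inverses $\sigma_X$ and $\varepsilon_X$ are written down.
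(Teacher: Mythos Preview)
Your proof is correct and follows essentially the same approach as the paper: build inverses to \(\bar\mu_X\) and \(\bar\mu_X^\dagger\) out of the unit \(\eta\), and for the converse define \(\eta\) as the preimage of \(\Id_A\) under \(\bar\mu^\dagger\), obtain the right unit axiom by adjunction, and then deduce the left unit axiom from the right one. The only cosmetic difference is that for smoothness the paper packages the argument as a contracting homotopy \(b'\circ s_{A\otimes X} + s_X\circ \mu_X = \Id_{A\otimes X}\) at the level of \(A\otimes X\), whereas you work directly with an inverse on the quotient \(A\otimes_A X\); these are equivalent.
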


Since~\(A\) is always a smooth \(A\)\nb-module, it follows that
unital algebras are the only ones for which smooth and rough
modules are the same.

\begin{proof}
  Assume first that~\(A\) has a unit.  Then the
  \(A\)\nb-modules \(A\otimes X\) and \(\ihom AX\) are unital
  for all~\(X\).  Hence so are \(A\otimes_A X\) as a quotient
  of \(A\otimes X\), and \(\ihom[A]AX\) as a submodule of
  \(\ihom AX\).  Therefore, smooth modules and rough modules
  are unital.  Conversely, let~\(X\) be a unital left or right
  module.

  We define a map \(s_X\colon X\to A\otimes X\) by composing the
  unitor \(X\to \Unit\otimes X\) and the unit map \(\Unit\to A\)
  tensored with~\(X\).  The induced map \(X\to A\otimes_A X\) is a
  section for~\(\bar\mu_X\).  We also get a map \(s_{A\otimes X}\colon
  A\otimes X \to A\otimes A\otimes X\).  Let
  \[
  b'\defeq \mu\otimes X - A\otimes \mu_X\colon A\otimes A\otimes X\to A\otimes X
  \]
  be the map whose cokernel is the balanced tensor product
  \(A\otimes_A X\).  We compute \(b'\circ s_{A\otimes X} +
  s_X\circ \mu_X= \Id_{A\otimes X}\).  This implies
  that~\(\bar\mu_X\) is invertible.  Thus unital modules are
  smooth.  In particular, \(A\) is self-induced.

  We also define a map \(s'_X\colon (\ihom AX) \to X\) by
  composing with the unit map \(\Unit\to A\) and identifying
  \((\ihom\Unit X) \cong X\).  There is a similar map
  \[
  s''_X = s'_{\ihom AX}\colon \ihom {(A\otimes A)}X \cong \ihom
  A{(\ihom AX)} \to \ihom AX
  \]
  that composes with \(A\otimes\Unit\colon A\to A\otimes A\).
  We compute that~\(s'_X\) is a section for~\(\mu_X^\dagger\)
  and that \(s''_X \circ (b')^\dagger + \mu_X^\dagger\circ s'_X
  = \Id_{\ihom AX}\), where~\((b')^\dagger\) is the map whose
  kernel is \(\ihom[A]AX\) and \(\mu_X^\dagger\colon X\to \ihom
  AX\) is adjoint to~\(\mu_X\).  This implies
  that~\(\bar\mu_X^\dagger\) is invertible, that is, unital
  modules are rough.

  Now let~\(A\) be an arbitrary self-induced algebra and assume
  that~\(A\) is rough as a left \(A\)\nb-module.  That is, the
  canonical map \(A\to\ihom[A]AA\) is invertible.  Adjoint
  associativity yields \(\Cat(\Unit, \ihom[A]AA) \cong
  \Cat_A(A,A)\), and this always contains a canonical element:
  the identity map on~\(A\).  If the map \(A\to\ihom[A]AA\) is
  invertible, then we get a unique \(\eta\in \Cat(\Unit,A)\)
  with \(\mu\circ (A\otimes\eta)=\Id_A\), that is, \(\eta\) is
  a right unit element.  Consider the map
  \(\mu\circ(\eta\otimes A)\colon A\to A\).  When we compose it
  with the isomorphism \(A\to\ihom[A]AA\), we get again the
  canonical map \(A\to \ihom[A]AA\) because~\(\eta\) is a right
  unit.  Since the map \(A\to\ihom[A]AA\) is invertible, it
  follows that \(\mu\circ(\eta\otimes A)\) is equal to the
  identity map, that is, \(\eta\) is a left unit as well.
\end{proof}

For a unital algebra, any module~\(X\) decomposes naturally as
a direct sum \(X_0\oplus X_1\), where~\(X_0\) carries the zero
module structre and~\(X_1\) is a unital module.  Both the
smoothening and the roughening functors map~\(X\) to~\(X_1\),
and the natural maps \(\Smooth(X)\to X\to\Rough(X)\) are the
maps \(X_1\to X\to X_1\) from the direct sum decomposition
\(X\cong X_0\oplus X_1\).

The following proposition summarises the formal properties of the
smoothening and roughening functors:

\begin{theorem}
  \label{the:properties_smoothen_roughen}
  The following diagram commutes, and the indicated maps are
  isomorphisms:
  \[
  \xymatrix{
    \Smooth\Smooth(X) \ar[r]_{\cong}^{\bar\mu_{\Smooth(X)}} \ar[d]_{\Smooth(\bar\mu_X)}^\cong&
    \Smooth(X) \ar@{=}[dl] \ar[r]^{\bar\mu^\dagger_{\Smooth(X)}} \ar[d]_{\bar\mu_X}&
    \Rough\Smooth(X) \ar[d]^{\Rough(\bar\mu_X)}_\cong\\
    \Smooth(X) \ar[r]^{\bar\mu_X} \ar[d]^\cong_{\Smooth(\bar\mu^\dagger_X)}&
    X \ar[r]^{\bar\mu^\dagger_X} \ar[d]_{\bar\mu^\dagger_X}&
    \Rough(X)  \ar@{=}[dl] \ar[d]^{\Rough(\bar\mu^\dagger_X)}_\cong\\
    \Smooth\Rough(X) \ar[r]_{\bar\mu_{\Rough(X)}}&
    \Rough(X) \ar[r]_{\bar\mu^\dagger_{\Rough(X)}}^\cong&
    \Rough\Rough(X)
  }
  \]
  That is, the two canonical maps
  \(\Smooth\Smooth(X)\to\Smooth(X)\) and
  \(\Rough(X)\to\Rough\Rough(X)\) are equal, and there are
  natural isomorphisms \(\Smooth\Smooth \cong \Smooth \cong
  \Smooth\Rough\) and \(\Rough\Smooth \cong \Rough \cong
  \Rough\Rough\).  In particular, modules of the form
  \(\Smooth(X)\) are always smooth and modules of the form
  \(\Rough(X)\) are always rough.

  The functor~\(\Smooth\) is left adjoint to~\(\Rough\), that is, there
  is a natural isomorphism
  \[
  \Cat_A\bigl(\Smooth(X),Y\bigr) \cong \Cat_A\bigl(X,\Rough(Y)\bigr)
  \]
  for all \(A\)\nb-modules \(X\) and~\(Y\).

  The functor \(X\mapsto\Smooth(X)\) is right adjoint to the embedding
  of the category of smooth modules: composition with~\(\bar\mu_X\)
  induces an isomorphism
  \[
  \Cat_A\bigl(X,\Smooth(Y)\bigr) \cong \Cat_A(X,Y)
  \]
  if~\(X\) is a smooth \(A\)\nb-module and~\(Y\) is any
  \(A\)\nb-module.

  The functor \(X\mapsto\Rough(X)\) is left adjoint to the embedding
  of the category of rough modules: composition
  with~\(\bar\mu_X^\dagger\) induces an isomorphism
  \[
  \Cat_A(\Rough(X),Y) \cong \Cat_A(X,Y)
  \]
  if~\(X\) is any \(A\)\nb-module and~\(Y\) is a rough
  \(A\)\nb-module.
\end{theorem}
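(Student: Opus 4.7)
The plan is to deduce every assertion from three inputs: the adjointness relation~\eqref{eq:adjoint_tensor_hom}, associativity of the balanced constructions, and the self-inducedness hypothesis $\bar\mu\colon A\otimes_A A\cong A$.  The adjunction $\Smooth\dashv\Rough$ is the immediate instance of~\eqref{eq:adjoint_tensor_hom} with $B=C=A$ and the $A,A$\nb-bimodule~$A$, which identifies $A\otimes_A Y=\Smooth(Y)$ and $\ihom[A]AZ=\Rough(Z)$.

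First I would establish the idempotence identities $\Smooth\Smooth\cong\Smooth$ and $\Rough\Rough\cong\Rough$.  Associativity of the balanced tensor product gives $A\otimes_A(A\otimes_A X)\cong(A\otimes_A A)\otimes_A X$, and then the self-inducedness map~$\bar\mu$ collapses the first two factors to yield $\Smooth\Smooth(X)\cong\Smooth(X)$.  Tracing this identification shows that the two natural arrows $\Smooth\Smooth(X)\rightrightarrows\Smooth(X)$, namely $\bar\mu_{\Smooth(X)}$ (multiply the outer two copies of~$A$) and $\Smooth(\bar\mu_X)$ (act by the inner~$A$ on~$X$), coincide and are both isomorphisms; this is the upper-left triangle of the diagram.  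A dual argument using the internal Hom associativity~\eqref{eq:define_ihom_lift} in place of associativity of~$\otimes_A$ yields $\Rough\Rough(X)\cong\Rough(X)$ together with $\Rough(\bar\mu^\dagger_X)=\bar\mu^\dagger_{\Rough(X)}$, which is the lower-right triangle.

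The reflection and coreflection statements then follow by the same pattern.  For the coreflection onto smooth modules, given smooth~$X$ and $g\colon X\to Y$ the desired lift is $\tilde g\defeq \Smooth(g)\circ\bar\mu_X^{-1}$; naturality of~$\bar\mu$ gives $\bar\mu_Y\circ\tilde g=g$, and uniqueness uses the identity $\Smooth(\bar\mu_Y)=\bar\mu_{\Smooth(Y)}$ just established.  The reflection onto rough modules is proved by the symmetric argument using $\Rough(\bar\mu^\dagger_X)=\bar\mu^\dagger_{\Rough(X)}$.  The remaining isomorphism claims in the diagram, that $\Smooth(\bar\mu^\dagger_X)$ and $\Rough(\bar\mu_X)$ are isomorphisms, follow by Yoneda: for each smooth test object~$Y$ the induced map $\Cat_A\bigl(Y,\Smooth(X)\bigr)\to\Cat_A\bigl(Y,\Smooth\Rough(X)\bigr)$, transported along the coreflection $\Cat_A\bigl(Y,\Smooth(Z)\bigr)\cong\Cat_A(Y,Z)$ and the adjunction $\Smooth\dashv\Rough$, becomes precomposition by the isomorphism $\bar\mu_Y$; the case of $\Rough(\bar\mu_X)$ is symmetric.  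All remaining commutativities in the diagram are naturality of~$\bar\mu$ or~$\bar\mu^\dagger$ applied to the morphisms $\bar\mu_X$ or $\bar\mu^\dagger_X$.

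The main obstacle is the identity $\Smooth(\bar\mu_X)=\bar\mu_{\Smooth(X)}$ together with its Hom counterpart.  In concrete examples such as $\Born$ it is visibly the defining relation $ab\otimes x=a\otimes bx$ of the balanced tensor product, but in an arbitrary closed monoidal category it demands a careful coherence argument with the associator; once it is in hand, every remaining step reduces to naturality of the canonical transformations or routine manipulation of the adjunction $\Smooth\dashv\Rough$.
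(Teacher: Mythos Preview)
Your proposal is correct and follows essentially the same architecture as the paper: associativity plus self-inducedness for $\Smooth\Smooth\cong\Smooth$, the tensor--Hom adjunction~\eqref{eq:adjoint_tensor_hom} for $\Smooth\dashv\Rough$, the explicit lift $\Smooth(g)\circ\bar\mu_X^{-1}$ for the coreflection onto smooth modules, and a Yoneda argument for the mixed compositions $\Smooth\Rough$ and $\Rough\Smooth$.

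The one genuine organisational difference is on the $\Rough$ side.  You argue \emph{symmetrically}: you obtain $\Rough\Rough\cong\Rough$ directly from a balanced version of~\eqref{eq:define_ihom_lift} (note that \eqref{eq:define_ihom_lift} as stated is unbalanced; you need $\ihom[A]{A}{(\ihom[A]{A}{X})}\cong\ihom[A]{(A\otimes_A A)}{X}$, which follows from~\eqref{eq:adjoint_tensor_hom} by Yoneda), and you prove the reflection onto rough modules by dualising the coreflection argument.  The paper instead \emph{transports} these facts from the $\Smooth$ side: it deduces $\Rough\cong\Rough\Rough$ from $\Smooth\Smooth\cong\Smooth$ by passing to right adjoints, and it obtains the reflection $\Cat_A(\Rough(X),Y)\cong\Cat_A(X,Y)$ for rough~$Y$ only at the very end, as a chain of the already established isomorphisms.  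Your route is more transparently self-dual; the paper's route is slightly more economical in that it never invokes balanced Hom associativity and never repeats the lift-construction argument.  Either way the identification $\bar\mu^\dagger_{\Rough(X)}=\Rough(\bar\mu^\dagger_X)$ is the point requiring care, and the paper in fact omits that computation, so your flagging of its tensor counterpart as the ``main obstacle'' is apt.
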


\begin{proof}
  Since~\(A\) is self-induced, we have \(A\otimes_A A \cong A\).
  Using the associativity of the balanced tensor product, this implies
  \[
  \Smooth\Smooth(X)
  \defeq A\otimes_A (A\otimes_A X)
  \cong (A\otimes_A A) \otimes_A X
  \cong A\otimes_A X
  \eqdef \Smooth(X).
  \]
  This isomorphism \(\Smooth\Smooth(X)\to\Smooth(X)\) is induced by
  \(\mu\otimes X\colon A\otimes A\otimes X\to A\otimes_A X\), that is,
  it is equal to \(\bar\mu_{\Smooth(X)}\); thus~\(\Smooth(X)\) is a
  smooth module.  Moreover, \(\mu\otimes
  X=A\otimes\mu_X\) as maps to \(A\otimes_A X\), so that
  \(\bar\mu_{\Smooth(X)} = \Smooth(\bar\mu_X)\).

  The adjointness of \(\Smooth\) and~\(\Rough\) is a special case of
  the adjointness between balanced tensor products and internal homs:
  \[
  \Cat_A(\Smooth(X), Y)
  \defeq \Cat_A(A\otimes_A X,Y)
  \cong \Cat_A(X,\ihom[A]AY)
  \eqdef \Cat_A\bigl(X, \Rough(Y)\bigr).
  \]
  The natural isomorphism \(\Smooth\circ\Smooth\cong\Smooth\) induces
  a natural isomorphism \(\Rough\cong\Rough\circ\Rough\) for the right
  adjoint functors.  A routine computation, which we omit,
  shows that this isomorphism is induced by
  \(\bar\mu_{\Rough(X)}^\dagger = \Rough(\bar\mu_X^\dagger)\).

  Next we show that composition with~\(\bar\mu_Y\) is an isomorphism
  \(\Cat_A\bigl(X,\Smooth(Y)\bigr) \cong \Cat_A(X,Y)\) if~\(X\) is a
  smooth \(A\)\nb-module and~\(Y\) is any \(A\)\nb-module.  We claim
  that its inverse is the composite
  \[
  \Cat_A(X,Y)
  \xrightarrow{\Smooth} \Cat_A\bigl(\Smooth(X),\Smooth(Y)\bigr)
  \xleftarrow[\cong]{\bar\mu_X^*} \Cat_A\bigl(X,\Smooth(Y)\bigr),
  \]
  where we use that~\(X\) is smooth, so that~\(\bar\mu_X^*\) is
  invertible.

  The naturality of the transformation~\(\bar\mu\) yields commuting
  diagrams
  \[
  \xymatrix{
    \Smooth(X) \ar[d]^\cong_{\bar\mu_X}\ar[r]^{\Smooth(f)}&\Smooth(Y)
    \ar[d]^{\bar\mu_Y}\\
    X \ar[r]_f&Y\\
  }
  \]
  for all \(f\in\Cat_A(X,Y)\).  Thus the composition
  \(\Cat_A(X,Y)\to\Cat_A(X,\Smooth(Y))\to \Cat_A(X,Y)\) is the
  identity map.  If
  \(f\in\Cat_A\bigl(X,\Smooth(Y)\bigr)\), then the diagram
  \[
  \xymatrix@C+1em{
    \Smooth(X) \ar@{=}[r] \ar[d]_{\Smooth(f)}&
    A\otimes_A X \ar[r]^-{\bar\mu_X}_-\cong \ar[d]_{A\otimes_A f} &
    X \ar[d]_{f} \ar[dr]^{\bar\mu_Y\circ f}\\
    \Smooth(\Smooth Y) \ar@{=}[r]&
    A\otimes_A A\otimes_A Y \ar[r]_-{\bar\mu\otimes_A Y}&
    A\otimes_A Y \ar[r]_-{\bar\mu_Y} & Y
  }
  \]
  commutes.  Since \(\bar\mu\otimes_A Y = A\otimes_A \bar\mu_Y\), we
  get \(f = \bigl(\Smooth (\bar\mu_Y
  f)\bigr)\circ(\bar\mu_X)^{-1}\).  This means that the
  composite map \(\Cat_A(X,\Smooth(Y))\to\Cat_A(X,Y)\to
  \Cat_A(X,\Smooth(Y))\) is the identity map as well.

  The adjointness relations already established imply
  \[
  \Cat_A\bigl(X,\Rough\circ\Smooth(Y)\bigr)
  \cong \Cat_A\bigl(\Smooth(X),\Smooth(Y)\bigr)
  \cong \Cat_A(\Smooth(X),Y)
  \cong \Cat_A\bigl(X,\Rough(Y)\bigr)
  \]
  for all \(A\)\nb-modules \(X\) and~\(Y\).  Hence
  \(\Rough\circ\Smooth(Y) \cong \Rough(Y)\) by the Yoneda Lemma.

  Let~\(Y\) be an \(A\)\nb-module and let~\(X\) be a smooth
  \(A\)\nb-module.  Then
  \[
  \Cat_A\bigl(X,\Smooth\Rough(Y)\bigr)
  \cong \Cat_A\bigl(X,\Rough(Y)\bigr)
  \cong \Cat_A(\Smooth(X),Y)
  \cong \Cat_A(X,Y)
  \cong \Cat_A\bigl(X,\Smooth(Y)\bigr).
  \]
  Hence the Yoneda Lemma yields \(\Smooth\circ\Rough(Y)\cong
  \Smooth(Y)\).

  It is routine to check that these isomorphisms
  \(\Rough\Smooth(Y)\cong\Rough(Y)\) and
  \(\Smooth(Y)\cong\Smooth\Rough(Y)\) are the canonical maps
  \(\Rough(\bar\mu_Y)\) and \(\Smooth(\bar\mu_Y^\dagger)\).

  If~\(Y\) is rough, that is, \(Y\cong\Rough(Y)\), then we
  compute
  \begin{multline*}
    \Cat_A(\Rough(X),Y)
    \cong \Cat_A\bigl(\Rough(X),\Rough(Y)\bigr)
    \cong \Cat_A(\Smooth\Rough(X),Y)
    \\\cong \Cat_A(\Smooth(X),Y)
    \cong \Cat_A\bigl(X,\Rough(Y)\bigr)
    \cong \Cat_A(X,Y),
  \end{multline*}
  that is, \(\Rough\) is left adjoint to the embedding of the
  category of rough modules.
\end{proof}

\section{Rough modules and unital modules over multiplier algebras}
\label{sec:modules_left_multiplier}

Let~\(A\) be a self-induced algebra in~\(\Cat\).  We view~\(A\)
as a left \(A\)\nb-module and let
\[
\Leftm(A)\defeq\ihom[A]AA
\]
be the algebra of left \(A\)\nb-module endomorphisms on~\(A\).
This is a unital algebra in~\(\Cat\).  It comes with a
canonical algebra homomorphism \(A\to\Leftm(A)\) by right
multiplication.

We may also view \(\Leftm(A)\) as the roughening of the left
\(A\)\nb-module structure on~\(A\), and the map \(A\to\Leftm(A)\) as
the canonical map \(\bar\mu^\dagger\colon A\to \Rough(A)\).
Theorem~\ref{the:properties_smoothen_roughen} implies \(A\otimes_A
\Leftm(A) \cong A\).  Roughly speaking, this means that~\(A\) is a
left ideal in \(\Leftm(A)\) (but the map \(A\to\Leftm(A)\) need not be
monic, see Section~\ref{sec:example_biprojective}).

If~\(X\) is a left \(A\)\nb-module, then the \(A\)\nb-module
structure on \(\Rough(X) \defeq \ihom[A]AX\) extends
canonically to a unital left \(\Leftm(A)\)-module structure
because~\(A\) is an \(A,\Leftm(A)\)-bimodule by construction.
Thus rough \(A\)\nb-modules become unital
\(\Leftm(A)\)-modules, and this provides a fully faithful
functor from the category of rough \(A\)\nb-modules to the
category of unital \(\Leftm(A)\)-modules.  Conversely, any
unital \(\Leftm(A)\)-module becomes an \(A\)\nb-module by
restricting the action.  But such restricted modules need not
be rough, and the restriction functor need not be fully
faithful.

To see this, consider free modules.  Free unital
\(\Leftm(A)\)-modules have the form \(\Leftm(A) \otimes V =
(\ihom[A]AA)\otimes V\) for some object~\(V\) of~\(\Cat\).  We
view this as a left \(A\)\nb-module and simplify its roughening
using Theorem~\ref{the:properties_smoothen_roughen} and the
associativity of~\(\otimes\):
\begin{multline*}
  \Rough(\Leftm(A)\otimes V)
  \cong \Rough\Smooth(\Leftm(A)\otimes V)
  \cong \Rough\bigl(\Smooth\bigl(\Leftm(A)\bigr)\otimes V\bigr)
  \\\cong \Rough\bigl(\Smooth \Rough(A)\otimes V\bigr)
  \cong \Rough(A\otimes V)
  = \ihom[A]A{(A\otimes V)}.
\end{multline*}
In general, \((\ihom[A]AA) \otimes V\) is different from
\(\ihom[A]A{(A\otimes V)}\).

We may also view smooth modules as modules over a suitable
\emph{right} multiplier algebra \(\Rightm(A)\).  This is a
unital algebra such that~\(A\) is an \(\Rightm(A),A\)-bimodule.
Since this involves a left module structure \(\Rightm(A)\otimes
A\to A\), we need the \emph{right} internal Hom functor defined
by \(\Cat(X\otimes Y,Z) \cong \Cat(X, \ihomr ZY)\).  This
functor is similar to \(\ihom XY\), but the evaluation maps are
of the form \(\ihomr XY\otimes X \to Y\), and the composition
maps are of the form \((\ihomr YZ) \otimes (\ihomr XY) \to
\ihomr XZ\).  Thus
\[
\Rightm(A) \defeq \ihomr[A]AA
\]
becomes a unital algebra such that~\(A\) is an
\(\Rightm(A),A\)-bimodule.  The \(A\)\nb-module structure on
\(\Smooth(X) \defeq A\otimes_A X\) extends canonically to a
left unital \(\Rightm(A)\)-module structure for any
\(A\)\nb-module~\(X\).  This provides a fully faithful
embedding from the category of smooth left \(A\)\nb-bimodules
to the category of unital left \(\Rightm(A)\)-modules.  Once
again, this functor is not an isomorphism of categories.

In general, \(\Leftm(A)\) and \(\Rightm(A)\) are different,
even in the symmetric monoidal category \(\Born\).  For
instance, this happens for the biprojective algebras \(V\otimes
W\) studied in Section~\ref{sec:example_biprojective}.

\section{Functoriality for homomorphisms and bimodules}
\label{sec:functorial}

Let \(A\) and~\(B\) be algebras in an additive closed monoidal
category.

\begin{definition}
  \label{def:module_category}
  Let \(\Mod{A}\) denote the category of smooth modules over a
  self-induced algebra~\(A\).
\end{definition}

An \(A,B\)-bimodule~\(M\) induces a functor \(M\otimes_B\blank\) from
the category of \(B\)\nb-modules to the category of \(A\)\nb-modules
and a functor \(\ihom[A]M\blank\) from the category of
\(A\)\nb-modules to the category of \(B\)\nb-modules.  These two
functors are adjoint to each other by~\eqref{eq:adjoint_tensor_hom}:
\begin{equation}
  \label{eq:adjoint_tensor_bimodule}
  \Cat_A(M\otimes_B X,Y)\cong \Cat_B(X,\ihom[A]MY)  
\end{equation}
if \(X\) and~\(Y\) are a \(B\)\nb-module and an \(A\)\nb-module,
respectively.  When do these functors preserve smooth or rough
modules?

The module \(\ihom[A]MY\) is usually not smooth, even if~\(Y\) is, and
\(M\otimes_B X\) is usually not rough, even if~\(X\) is.  But we have
the following positive results:

\begin{proposition}
  \label{pro:induce_smooth_rough}
  Let \(A\) and~\(B\) be algebras, assume that~\(A\) is self-induced.
  Let~\(Y\) be any left \(B\)\nb-module.  If~\(M\) is an
  \(A,B\)-bimodule that is smooth as a left \(A\)\nb-module, then
  \(M\otimes_B Y\) is a smooth \(A\)\nb-module.

  If~\(M\) is a \(B,A\)-bimodule that is smooth as a right
  \(A\)\nb-module, then \(\ihom[B]MY\) is a rough \(A\)\nb-module.
\end{proposition}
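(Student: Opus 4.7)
My plan is to treat the two statements essentially dually, using associativity of the balanced tensor product for the first and the adjunction machinery from Section~\ref{sec:preliminaries} together with the Yoneda Lemma for the second.

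For the first assertion, the plan is to observe that the balanced tensor product is associative (as remarked in Section~\ref{sec:preliminaries}), so that there is a canonical bimodule isomorphism
\[
A\otimes_A (M\otimes_B Y) \cong (A\otimes_A M)\otimes_B Y.
\]
Since $M$ is smooth as a left $A$\nb-module, $\bar\mu_M\colon A\otimes_A M\to M$ is an isomorphism of left $A$\nb-modules; a short diagram chase shows that it is moreover an isomorphism of $A,B$\nb-bimodules, because the right $B$\nb-action on $A\otimes_A M$ commutes with the quotient map defining $\otimes_A$ and is compatible with the $B$\nb-action on $M$. Tensoring over $B$ with $Y$ then yields an $A$\nb-module isomorphism $(A\otimes_A M)\otimes_B Y\cong M\otimes_B Y$, and one checks that the resulting composition $A\otimes_A (M\otimes_B Y)\to M\otimes_B Y$ agrees with $\bar\mu_{M\otimes_B Y}$, using the description of that map as the one induced by $\mu_M\otimes Y$.

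For the second assertion, the plan is to prove $\Rough(\ihom[B]MY)\cong \ihom[B]MY$ by showing both functors are represented by the same left $A$\nb-module on $X\in\Mod{A}$. Using the adjunction~\eqref{eq:adjoint_tensor_hom} applied to the $B,A$\nb-bimodule $M$ and a left $A$\nb-module $X$, one has
\[
\Cat_B(M\otimes_A X,Y) \cong \Cat_A(X,\ihom[B]MY).
\]
Combining this with~\eqref{eq:adjoint_tensor_hom} for the bimodule $A$ and with associativity of $\otimes_A$, for any left $A$\nb-module $X$ I would write the chain
\begin{align*}
\Cat_A\bigl(X,\Rough(\ihom[B]MY)\bigr)
&\cong \Cat_A(A\otimes_A X,\ihom[B]MY) \\
&\cong \Cat_B\bigl(M\otimes_A (A\otimes_A X),Y\bigr) \\
&\cong \Cat_B\bigl((M\otimes_A A)\otimes_A X,Y\bigr) \\
&\cong \Cat_B(M\otimes_A X,Y) \\
&\cong \Cat_A(X,\ihom[B]MY),
\end{align*}
where the penultimate step uses that $M$ is smooth as a right $A$\nb-module, so that $M\otimes_A A\cong M$ via the analogue of $\bar\mu$. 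The Yoneda Lemma then gives $\Rough(\ihom[B]MY)\cong \ihom[B]MY$, and tracing the chain of isomorphisms back shows that the identification is precisely the natural map $\bar\mu^\dagger$, so $\ihom[B]MY$ is rough.

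The main obstacle in both parts is bookkeeping rather than any substantive difficulty: one must check that the relevant structural isomorphisms (associativity of $\otimes_A$, the bimodule version of $\bar\mu_M$, and the adjunction~\eqref{eq:adjoint_tensor_hom}) respect the full bimodule structures involved, and that the final Yoneda identification matches the canonical transformations $\bar\mu$ and $\bar\mu^\dagger$. None of this requires new ideas beyond the formal properties of $\otimes_A$ and $\ihom[A]{}{}$ recorded in Section~\ref{sec:preliminaries}.
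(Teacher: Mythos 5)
Your proof is correct. The first assertion is handled exactly as in the paper: associativity of the balanced tensor product plus the bimodule isomorphism $A\otimes_A M\cong M$ give $\Smooth(M\otimes_B Y)\cong (A\otimes_A M)\otimes_B Y\cong M\otimes_B Y$, and your remark that $\bar\mu_M$ must be checked to be a map of $A,B$\nb-bimodules is the right (routine) point to flag. For the second assertion you take a slightly different route: the paper uses the \emph{internal} strengthening of the adjunction~\eqref{eq:adjoint_tensor_hom} (the balanced analogue of~\eqref{eq:define_ihom_lift}) to compute directly $\Rough(\ihom[B]MY)\cong \ihom[B]{(M\otimes_A A)}Y\cong \ihom[B]MY$ in one line, whereas you stay at the level of morphism sets and invoke the Yoneda Lemma. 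Your version has the advantage of only using the hom-set adjunction~\eqref{eq:adjoint_tensor_hom}, which is explicitly established in the paper, rather than its internal-Hom lift, which the paper cites without proof; the price is that you must verify naturality in $X$ and that the Yoneda isomorphism coincides with $\bar\mu^\dagger_{\ihom[B]MY}$ --- a check you correctly identify and which is needed because roughness is defined as invertibility of that specific map, not as abstract isomorphy. Both arguments are sound and of comparable length; the paper itself omits the analogous identification of canonical maps in Theorem~\ref{the:properties_smoothen_roughen}, so your level of detail matches its conventions.
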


\begin{proof}
  The first assertion follows from the associativity of balanced
  tensor products:
  \[
  \Smooth(M\otimes_B Y)
  \defeq A\otimes_A (M\otimes_B Y)
  \cong (A\otimes_A M)\otimes_B Y
  \cong M\otimes_B Y.
  \]
  The second assertion uses a strengthening of the adjointness
  relation~\eqref{eq:adjoint_tensor_hom} as
  in~\eqref{eq:define_ihom_lift} with internal Hom functors
  instead of morphism sets.  Thus
  \[
  \Rough(\ihom[B]MY)
  \cong \ihom[A]A{(\ihom[B]MY)}
  \cong \ihom[B]{(M\otimes_A A)}Y
  \cong \ihom[B]MY.\qedhere
  \]
\end{proof}

For general~\(M\), we get smooth or rough modules if we compose
the two functors above with the smoothening or roughening
functors.  The functor \(\Smooth(\ihom[B]M\blank)\) maps
\(B\)\nb-modules to smooth \(A\)\nb-modules, and
\(\Rough(M\otimes_B\blank)\) maps \(B\)\nb-modules to rough
\(A\)\nb-modules.  The other two combinations of our functors
are not worth considering because the computations above show
\[
\Smooth_A(M\otimes_B X) \cong \Smooth_A(M) \otimes_B X,\qquad
\Rough_A(\ihom[B]MY) \cong \ihom[B]{(\Smooth_A M)}Y.
\]

\begin{proposition}
  \label{pro:adjointness_bimodule_functors}
  Let~\(M\) be a smooth \(A,B\)-bimodule.  Then the functors
  \begin{alignat*}{2}
    \Mod{B}&\to\Mod{A},&\qquad X&\mapsto M\otimes_B X,\\
    \Mod{A}&\to\Mod{B},&\qquad Y&\mapsto \Smooth_B(\ihom[A]MY),
  \end{alignat*}
  are adjoint to each other, that is, \(\Cat_A(M\otimes_B X,Y) \cong
  \Cat_B\bigl(X,\Smooth_B(\ihom[A]MY)\bigr)\) if~\(X\) is a smooth
  \(B\)\nb-module and~\(Y\) a smooth \(A\)\nb-module.
\end{proposition}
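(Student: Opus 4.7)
The plan is to derive the adjunction by chaining together two adjunctions that are already on the table. Specifically, I would first invoke the general adjointness \eqref{eq:adjoint_tensor_bimodule}, which gives
\[
\Cat_A(M\otimes_B X, Y) \cong \Cat_B\bigl(X, \ihom[A]MY\bigr)
\]
for any $B$-module $X$ and $A$-module $Y$, with no smoothness hypotheses. Since $M$ is smooth as a left $A$-module, Proposition~\ref{pro:induce_smooth_rough} guarantees that $M\otimes_B X$ is a smooth $A$-module, so the left-hand side is indeed a morphism set in $\Mod{A}$; this takes care of the fact that the functor $X \mapsto M\otimes_B X$ actually lands in $\Mod{A}$ as required by the statement.

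The second step is to use the smoothness of $X$ together with the part of Theorem~\ref{the:properties_smoothen_roughen} which says that $\Smooth_B$ is the right adjoint to the inclusion of $\Mod{B}$ into the category of all $B$-modules: composition with $\bar\mu_{\ihom[A]MY}$ induces a natural isomorphism
\[
\Cat_B\bigl(X, \Smooth_B(\ihom[A]MY)\bigr) \cong \Cat_B\bigl(X, \ihom[A]MY\bigr)
\]
whenever $X$ is smooth. Splicing this into the previous isomorphism yields the desired formula.

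The only non-routine point, and the one I would take care to mention, is that the composite bijection is natural in both $X$ and $Y$ as morphisms of functors $\Mod{B}^{\op}\times\Mod{A}\to\Ab$, so that the pair $(M\otimes_B\blank, \Smooth_B\circ \ihom[A]M\blank)$ really forms an adjoint pair of functors between $\Mod{B}$ and $\Mod{A}$, not just a family of set bijections. Naturality in both variables is inherited from the naturality of \eqref{eq:adjoint_tensor_bimodule} and of $\bar\mu$, so no additional computation is needed. I do not expect any genuine obstacle; the work is really in having set up the two adjunctions in Theorem~\ref{the:properties_smoothen_roughen} and Proposition~\ref{pro:induce_smooth_rough} correctly beforehand.
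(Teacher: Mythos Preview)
Your argument is correct and matches the paper's proof essentially verbatim: the paper also chains the adjunction~\eqref{eq:adjoint_tensor_bimodule} with the isomorphism \(\Cat_B\bigl(X,\Smooth_B(\ihom[A]MY)\bigr)\cong\Cat_B(X,\ihom[A]MY)\) from Theorem~\ref{the:properties_smoothen_roughen}. Your additional remarks on why \(M\otimes_B X\) lands in \(\Mod{A}\) and on naturality are sound elaborations that the paper leaves implicit.
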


\begin{proof}
  Theorem~\ref{the:properties_smoothen_roughen} implies
  \(\Cat_B\bigl(X,\Smooth_B (\ihom[A]MY)\bigr) \cong
  \Cat_B(X,\ihom[A]MY)\), and this is isomorphic to
  \(\Cat_A(M\otimes_B X,Y)\) by~\eqref{eq:adjoint_tensor_bimodule}.
\end{proof}

We may define Morita equivalence for self-induced algebras as
in~\cite{Gronbaek:Morita_self-induced}:

\begin{definition}
  \label{def:Morita_equivalence}
  Two self-induced algebras \(A\) and~\(B\) are called
  \emph{Morita equivalent} if there exist a smooth
  \(A,B\)-bimodule~\(P\), a smooth \(B,A\)-bimodule~\(Q\), and
  natural isomorphisms \(P\otimes_B Q \cong A\) and
  \(Q\otimes_A P \cong B\).
\end{definition}

\begin{proposition}
  \label{pro:Morita_categories}
  If \(A\) and~\(B\) are Morita equivalent via the bimodules
  \(P\) and~\(Q\), then the categories of smooth \(A\)- and
  \(B\)\nb-modules are equivalent via the functors
  \begin{alignat*}{2}
    \Mod{A} &\to \Mod{B},&\qquad X&\mapsto Q\otimes_A X,\\
    \Mod{B} &\to \Mod{A},&\qquad Y&\mapsto P\otimes_B Y.
  \end{alignat*}
  The categories of rough \(A\)- and \(B\)\nb-modules are
  equivalent via the functors \(X\mapsto \ihom[A]PX\) and
  \(Y\mapsto \ihom[B]QY\).
\end{proposition}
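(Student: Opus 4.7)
The plan is to check that each of the four functors lands in the claimed target category by invoking Proposition~\ref{pro:induce_smooth_rough}, and then to verify that the two pairs of functors are mutually quasi-inverse using associativity of the balanced tensor product (for the smooth case) or the iterated internal adjointness~\eqref{eq:define_ihom_lift} (for the rough case), together with the defining isomorphisms \(P\otimes_B Q\cong A\) and \(Q\otimes_A P\cong B\) of the Morita equivalence.

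For the smooth case, since~\(Q\) is smooth as a left \(B\)\nb-module, the first half of Proposition~\ref{pro:induce_smooth_rough} guarantees that \(Q\otimes_A X\) is a smooth \(B\)\nb-module for any left \(A\)\nb-module~\(X\); symmetrically \(P\otimes_B Y\) is a smooth \(A\)\nb-module. To show these are mutually inverse, I would use the associativity of the balanced tensor product discussed after Example~\ref{exa:balanced_tensor_Born} to compute, for smooth \(X\in\Mod{A}\),
\[
P\otimes_B (Q\otimes_A X)
\cong (P\otimes_B Q)\otimes_A X
\cong A\otimes_A X
\cong X,
\]
where the last isomorphism is \(\bar\mu_X\), invertible by smoothness of~\(X\). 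The analogous computation \(Q\otimes_A (P\otimes_B Y)\cong Y\) gives the inverse relation on the other side, and naturality is immediate since all three isomorphisms involved are natural.

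For the rough case, since~\(P\) is smooth as a right \(B\)\nb-module (as part of being a smooth \(A,B\)\nb-bimodule), the second half of Proposition~\ref{pro:induce_smooth_rough} yields that \(\ihom[A]PX\) is a rough \(B\)\nb-module whenever~\(X\) is any \(A\)\nb-module; symmetrically \(\ihom[B]QY\) is rough over~\(A\). To invert these functors, I would use the balanced analogue of~\eqref{eq:define_ihom_lift} (obtained by the same argument that produces~\eqref{eq:adjoint_tensor_hom}) to compute, for a rough \(A\)\nb-module~\(X\),
\[
\ihom[B]Q{(\ihom[A]PX)}
\cong \ihom[A]{(P\otimes_B Q)}X
\cong \ihom[A]AX
\cong X,
\]
where the final isomorphism is \((\bar\mu^\dagger_X)^{-1}\), invertible by roughness of~\(X\).

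The main obstacle I expect is bookkeeping for the bimodule structures: one must check that the associativity isomorphism for balanced tensor products and the iterated internal-Hom adjointness are compatible with all the appropriate left and right actions, so that the composite isomorphisms above are actually isomorphisms of \(A\)\nb- (respectively \(B\)\nb-)modules and not merely of underlying objects of~\(\Cat\). Once that compatibility is in place, naturality of the unit and counit of each adjunction is routine, and the two pairs of natural isomorphisms assemble into genuine equivalences of categories.
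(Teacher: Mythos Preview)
Your proposal is correct and follows essentially the same route as the paper: associativity of balanced tensor products plus the Morita isomorphisms and smoothness of~\(X\) for the smooth case, and the (internal) adjointness~\eqref{eq:adjoint_tensor_hom} for the rough case. You are in fact more explicit than the paper, which does not separately invoke Proposition~\ref{pro:induce_smooth_rough} to check the target categories and only gestures at ``the adjointness relation'' for the rough statement; your iterated internal-Hom computation is exactly the intended unpacking.
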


\begin{proof}
  The equivalence \(\Mod{A}\cong\Mod{B}\) follows
  from the associativity of tensor products and the assumed
  isomorphisms \(P\otimes_B Q\cong A\), \(Q\otimes_A P\cong
  B\), and from the definition of smooth modules: \(A\otimes_A
  X\cong X\) and \(B\otimes_B Y\cong Y\).  The corresponding
  assertions about rough modules also use the adjointness
  relation~\eqref{eq:adjoint_tensor_bimodule}.
\end{proof}

Since the categories of smooth and rough modules are
equivalent, we may also construct the equivalence between rough
module categories from the equivalence between the smooth
module categories by first smoothening, then applying the
equivalence, and then roughening.  A straightforward
computation shows that this sequence of steps produces the
functor described above:
\begin{equation}
  \label{eq:rough_Morita_smooth}
  \Rough_A\bigl(P\otimes_B \Smooth_B(X)\bigr) \cong
  \ihom[B]QX
\end{equation}
for all rough \(B\)\nb-modules~\(X\).  First, since~\(P\) is
smooth, we compute
\[
P\otimes_B \Smooth_B(X)
\cong P\otimes_B (B\otimes_B X)
\cong (P\otimes_B B)\otimes_B X
\cong P\otimes_B X.
\]
The argument that shows that \(Q\otimes_A \blank\) is an equivalence
of categories shows more: tensoring with~\(Q\) induces an isomorphism
\[
\ihom[A] XY \cong \ihom[B]{(Q\otimes_A X)}{(Q\otimes_A Y)}
\]
for all smooth \(A\)\nb-modules \(X\) and~\(Y\).  Hence
\[
\ihom[A]A{(P\otimes_B X)}
\cong \ihom[B]{(Q\otimes_A A)}{\bigl(Q\otimes_A (P\otimes_B X)\bigr)}
\cong \ihom[B]Q{(B\otimes_B X)}
\cong \ihom[B]QX.
\]

Although the author is not aware of an example, it seems likely
that there exist equivalences \(\Mod{A} \cong \Mod{B}\) (even
for unital \(A\) and~\(B\)) that are not induced by a bimodule
as above.  To get a bimodule from an equivalence of categories,
we assume that further structure is preserved.  There is a
tensor product operation \(X,Y\mapsto X\otimes Y\) for a smooth
\(A\)\nb-module~\(X\) and an object~\(Y\) of~\(\Cat\), which
turns the category of smooth \(A\)\nb-modules into a right
\(\Cat\)\nb-category in the notation
of~\cite{Pareigis:C-categories}.  The functor
\(M\otimes_B\blank\) for a bimodule~\(M\) is a
\(\Cat\)\nb-functor in the notation
of~\cite{Pareigis:C-categories}, that is, there are natural
isomorphisms \(M\otimes_B (X\otimes Y) \cong (M\otimes_B X)\otimes Y\)
satisfying appropriate coherence laws.

\begin{proposition}
  \label{pro:functor_bimodule}
  Let \(A\) and~\(B\) be self-induced algebras.  A functor
  \[
  F\colon \Mod{A}\to\Mod{B}
  \]
  is of the form \(M\otimes_B\blank\) for a smooth
  \(B,A\)-bimodule~\(M\) if and only if it preserves cokernels
  and is a \(\Cat\)\nb-functor.  The bimodule~\(M\) is
  determined uniquely up to isomorphism.
\end{proposition}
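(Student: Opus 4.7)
The approach is Eilenberg--Watts style: recover $M$ as $F(A)$. The ``only if'' direction is routine: for a smooth $B,A$-bimodule~$M$, the functor $M\otimes_A\blank$ preserves cokernels because it is a left adjoint by Proposition~\ref{pro:adjointness_bimodule_functors}, and it is a $\Cat$\nb-functor via the associativity isomorphism $M\otimes_A(X\otimes V)\cong(M\otimes_A X)\otimes V$.

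For the converse, the candidate bimodule is $M\defeq F(A)$, whose left $B$\nb-module structure is built in. To put a right $A$\nb-action on~$M$, I would use the multiplication $\mu\colon A\otimes A\to A$, which is a morphism in $\Mod{A}$ with left action on the first factor. The $\Cat$\nb-functor structure supplies an isomorphism $F(A\otimes A)\cong F(A)\otimes A=M\otimes A$, so $F(\mu)$ becomes a map $M\otimes A\to M$; this is the right action. Associativity comes from applying~$F$ to the associativity pentagon for~$\mu$ together with the coherence of the $\Cat$\nb-functor, and compatibility with the left $B$\nb-action is automatic because $F(\mu)$ is a morphism in $\Mod{B}$. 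Smoothness of~$M$ as a right $A$\nb-module, that is, $M\otimes_A A\cong M$, follows by applying~$F$ to the coequalizer witnessing $A\otimes_A A\cong A$ (which holds since $A$ is self-induced). The two parallel arrows behave differently under~$F$: the map $A\otimes\mu$ is $\Id_A$ tensored, through the $\Cat$\nb-structure, with the $\Cat$\nb-morphism~$\mu$, hence goes to $M\otimes\mu$; the map $\mu\otimes A$ is the $\Mod{A}$\nb-morphism~$\mu$ tensored with $\Id_A\in\Cat$, hence goes to $F(\mu)\otimes A$. Cokernel preservation then identifies the result as $M\otimes_A A$ and as $F(A)=M$.

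The core step is to construct a natural isomorphism $F(X)\cong M\otimes_A X$ for every smooth $A$\nb-module~$X$. Smoothness presents~$X$ as the cokernel in $\Mod{A}$ of
\[
\mu\otimes X-A\otimes\mu_X\colon A\otimes A\otimes X\to A\otimes X.
\]
Applying~$F$ preserves the cokernel; the $\Cat$\nb-functor isomorphisms rewrite source and target as $M\otimes A\otimes X$ and $M\otimes X$; and the same two kinds of computations as in the previous paragraph send the parallel pair to $F(\mu)\otimes X$ and $M\otimes\mu_X$. The resulting cokernel is, by definition, $M\otimes_A X$. Uniqueness of~$M$ up to isomorphism follows from $F(A)\cong M\otimes_A A\cong M$. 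I expect the main obstacle to be the bookkeeping with the $\Cat$\nb-functor coherence axioms: checking that the right $A$\nb-action on~$M$ is associative, that the identifications $F(\mu\otimes X)=F(\mu)\otimes X$ and $F(A\otimes\mu_X)=M\otimes\mu_X$ are valid and compatible, and that the resulting isomorphism $F(X)\cong M\otimes_A X$ is natural in~$X$ as a morphism in $\Mod{B}$. It should suffice to verify that the comparison $M\otimes X\cong F(A\otimes X)\xrightarrow{F(\mu_X)}F(X)$ descends to a map $M\otimes_A X\to F(X)$ and inverts the isomorphism produced by the cokernel argument.
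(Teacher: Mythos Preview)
Your proposal is correct and follows essentially the same Eilenberg--Watts approach as the paper: set \(M=F(A)\), obtain the right \(A\)\nb-action from \(F(\mu)\) via the \(\Cat\)\nb-functor isomorphism \(F(A\otimes A)\cong F(A)\otimes A\), and then use the cokernel presentation \(A\otimes A\otimes X\to A\otimes X\to X\) of a smooth module to identify \(F(X)\) with \(M\otimes_A X\). You are more explicit than the paper about the ``only if'' direction and about tracking the two parallel maps \(\mu\otimes X\) and \(A\otimes\mu_X\) separately through the \(\Cat\)\nb-functor structure, which the paper simply asserts yields ``exactly the map that defines \(F(A)\otimes_A X\)''; this extra care is warranted and does not deviate from the paper's argument.
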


\begin{proof}
  The underlying left \(B\)\nb-module of~\(M\) must be \(F(A)\), of
  course.  To get the right \(A\)\nb-module structure on \(M\defeq
  F(A)\), we use the multiplication map \(\mu\colon A\otimes A\to A\).
  This module homomorphism induces a natural \(B\)\nb-module map
  \[
  M\otimes A \defeq F(A)\otimes A \cong F(A\otimes A) \to F(A)
  \eqdef M.
  \]
  Now let~\(X\) be any smooth \(A\)\nb-module.  Then \(X\cong
  A\otimes_A X\) is the cokernel of the canonical map \(A\otimes
  A\otimes X\to A\otimes X\) that defines \(A\otimes_A X\).
  Since~\(F\) is compatible with cokernels and tensor products,
  \(F(X)\) is naturally isomorphic to the cokernel of an induced map
  \(F(A)\otimes A\otimes X\to F(A)\otimes X\).  But this is exactly
  the map that defines \(F(A)\otimes_A X\), so that \(F(X) \cong
  F(A)\otimes_A X\) for all smooth \(A\)\nb-modules~\(X\).  In
  particular, \(F(A)\) is smooth as a right \(A\)\nb-module.
  It is easy to see that \(F(A)\) with the bimodule structure
  described above is the only one that may induce the functor~\(F\).
\end{proof}

We may
use an algebra homomorphism \(f\colon A\to B\) to turn
\(B\)\nb-modules into \(A\)\nb-modules.  But when does this
functor~\(f^*\) preserve smoothness or roughness of bimodules?  To
analyse this, we use~\(f\) to view~\(B\) as an \(A,B\)-bimodule or as
a \(B,A\)-bimodule.  Then \(B\otimes_B X\cong X\) for smooth
\(B\)\nb-modules~\(X\), so that~\(f^*\) on smooth modules is the
tensor product functor for the \(A,B\)-bimodule~\(B\).  By
Proposition~\ref{pro:induce_smooth_rough}, this maps smooth
\(B\)\nb-modules to smooth \(A\)\nb-modules provided~\(B\) is smooth
as a left \(A\)\nb-module.  And \(X\cong (\ihom[B]BX)\) for a rough
\(B\)\nb-module~\(X\), so that~\(f^*\) on rough modules is the
internal Hom functor for the \(B,A\)-bimodule~\(B\).  By
Proposition~\ref{pro:induce_smooth_rough}, this maps rough
\(B\)\nb-modules to rough \(A\)\nb-modules provided~\(B\) is smooth as
a right \(A\)\nb-module.  Summing up:

\begin{lemma}
  \label{lem:pull-back_smooth_rough}
  Let \(f\colon A\to B\) be an algebra homomorphism.  Assume that~\(B\)
  is smooth both as a left and as a right \(A\)\nb-module.  Then the
  induced functor~\(f^*\) from \(B\)\nb-modules to \(A\)\nb-modules
  maps smooth modules to smooth modules and rough modules to rough
  modules.
\end{lemma}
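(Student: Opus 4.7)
The plan is to realize the pull-back functor $f^*$ as a functor of the form $M\otimes_B\blank$ on smooth modules and as $\ihom[B]M\blank$ on rough modules, with $M=B$ in both cases, and then to invoke Proposition~\ref{pro:induce_smooth_rough}. The algebra homomorphism $f\colon A\to B$ equips $B$ with two natural bimodule structures: as an $A,B$-bimodule where $A$ acts on the left through~$f$, and as a $B,A$-bimodule where $A$ acts on the right through~$f$. The underlying one-sided $A$-module structures in these two cases are exactly the ones assumed smooth in the hypothesis.

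For a smooth left $B$-module~$X$, the defining isomorphism $B\otimes_B X\cong X$ from Definition~\ref{def:self-induced} identifies $f^*(X)$ with $B\otimes_B X$, where the $A$-module structure on the right-hand side comes from viewing $B$ as the $A,B$-bimodule above. Since $B$ is smooth as a left $A$-module by hypothesis, the first half of Proposition~\ref{pro:induce_smooth_rough} yields that $B\otimes_B X$, and hence $f^*(X)$, is a smooth $A$-module.

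For a rough left $B$-module~$Y$, the defining isomorphism $Y\cong\ihom[B]BY$ similarly identifies $f^*(Y)$ with $\ihom[B]BY$, the $A$-module structure on the right coming this time from viewing $B$ as the $B,A$-bimodule above. Since $B$ is smooth as a right $A$-module by hypothesis, the second half of Proposition~\ref{pro:induce_smooth_rough} yields that $\ihom[B]BY$, and hence $f^*(Y)$, is a rough $A$-module.

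The only routine point to check is that the $A$-module structure on $f^*(X)$ obtained by restriction along~$f$ coincides with the $A$-module structure that $B\otimes_B X$, respectively $\ihom[B]BY$, inherits from the bimodule structure on~$B$ constructed above. This is a direct diagram chase using the canonical bimodule structures on balanced tensor products and internal Homs recalled in Section~\ref{sec:preliminaries}, and presents no real obstacle; in effect, the entire content of the lemma is a reorganisation of Proposition~\ref{pro:induce_smooth_rough}, so no genuinely hard step arises.
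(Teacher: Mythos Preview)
Your proof is correct and follows essentially the same approach as the paper: the paper's argument, given in the paragraph immediately preceding the lemma, likewise identifies \(f^*\) on smooth modules with \(B\otimes_B\blank\) for the \(A,B\)-bimodule~\(B\) and on rough modules with \(\ihom[B]B\blank\) for the \(B,A\)-bimodule~\(B\), and then invokes Proposition~\ref{pro:induce_smooth_rough}. Your write-up is slightly more explicit about the routine compatibility check, but the strategy is identical.
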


More generally, the above construction only used compatible \(A,B\)-
and \(B,A\)-bimodule structures on~\(B\).  These still exist if we
replace~\(f\) by an algebra homomorphism into the multiplier algebra
(also called double centraliser algebra) of~\(B\).

Even if~\(B\) is not smooth as a left or right \(A\)\nb-module, the
above discussion shows how to get functors between the smooth and
rough module categories: simply replace the \(A,B\)- or
\(B,A\)-bimodule~\(B\) by the appropriate smoothening and argue
exactly as above.  Furthermore, we may turn the functor~\(f^*\) on
rough modules into one on smooth modules by composing with the
smoothening:
\[
X\mapsto \Smooth_A(B\otimes_B X) \cong \Smooth_A(f^*X),\qquad
X\mapsto \Smooth_A(\ihom[B]BX) \cong \Smooth_A(f^*\Rough_B X).
\]
for a smooth left \(B\)\nb-module~\(X\).

As a consequence, any algebra homomorphism from~\(A\) to the
multiplier algebra of~\(B\) yields two pairs of adjoint functors
between the categories of smooth modules over \(A\) and~\(B\).  The
first pair consists of the functors
\begin{alignat*}{2}
  \Mod{B}&\to\Mod{A},&\qquad X&\mapsto \Smooth_A(f^*X),\\
  \Mod{A}&\to\Mod{B},&\qquad Y&\mapsto \Smooth_B(\ihom[A]{(A\otimes_A B)}Y),
\end{alignat*}
the second pair of the functors
\begin{alignat*}{2}
  \Mod{A}&\to\Mod{B},&\qquad Y&\mapsto B\otimes_A Y,\\
  \Mod{B}&\to\Mod{A},&\qquad X&\mapsto \Smooth_A(\ihom[B]{(B\otimes_A A)}X).
\end{alignat*}

\section{Applications}
\label{sec:applications}

\subsection{A simple biprojective example}
\label{sec:example_biprojective}

First we consider a very simple and well-known class of
examples.  Let \(V\) and~\(W\) be objects of~\(\Cat\) and let
\(b\colon W\otimes V\to\Unit\) be a map.  Then \(A\defeq
W\otimes V\) becomes an associative non-unital algebra for the
product
\[
V\otimes W\otimes V\otimes W
\xrightarrow{V\otimes b\otimes W} V\otimes W.
\]
Similar maps define a left \(A\)\nb-module structure on~\(V\)
and a right \(A\)\nb-module structure on~\(W\).  We may also
view \(V\) and~\(W\) as an \(A,\Unit\)\nb-bimodule and a
\(\Unit,A\)\nb-bimodule because any object of~\(\Cat\) carries
a canonical unital \(\Unit\)\nb-bimodule structure given by the
left and right unitors.

From now on, we assume also that~\(b\) is non-degenerate in the
sense that there exist maps \(v\colon \Unit\to V\) and
\(w\colon \Unit\to W\) for which \(b\circ (w\otimes v)\) is the
canonical isomorphism \(\Unit\otimes\Unit\to\Unit\).  Then the
map \(V\otimes w\otimes v\otimes W\colon A\to A\otimes A\) is a
bimodule section for the multiplication map \(A\otimes A \to
A\).  This implies that~\(A\) is biprojective, that is, \(A\)
is projective as an \(A\)\nb-bimodule.  A straightforward
computation, which we omit, shows that~\(A\) is self-induced.
The bimodules \(V\) and~\(W\) are smooth and implement a Morita
equivalence between \(\Unit\) and~\(A\).

We may use this to describe the categories of smooth and rough
\(A\)\nb-modules.  First, Proposition~\ref{pro:unital_case}
identifies smooth and rough \(\Unit\)\nb-modules with unital
\(\Unit\)\nb-modules.  Since any object of~\(\Cat\) carries a
unique unital \(\Unit\)\nb-module structure, it follows that
the categories of smooth and rough \(\Unit\)\nb-modules are
both equivalent to~\(\Cat\).  Due to the Morita equivalence,
the categories of smooth and rough \(A\)\nb-modules are
equivalent to~\(\Cat\) as well
(Proposition~\ref{pro:Morita_categories}).  More precisely, the
equivalences map an object~\(X\) of~\(\Cat\) to \(V\otimes X\)
and \(\ihom WX\), respectively, where we use the left
\(A\)\nb-module structure on~\(V\) and the right
\(B\)\nb-module structure on~\(W\).

For instance, if~\(\Cat\) is the category of complete convex
bornological vector spaces (Example~\ref{exa:monoidal_Born}),
then we may take \(V=W=\bigoplus_\N \C\) with the obvious
pairing \(b(x,y) \defeq \sum_{n\in\N} x_ny_n\).  Then~\(A\)
is the algebra~\(\Mat_\infty\) of finite matrices.  Our
results show that the categories of smooth and rough
\(\Mat_\infty\)\nb-modules are both equivalent to the category of
complete convex bornological vector spaces, where a
bornological vector space~\(X\) corresponds to the smooth
\(\Mat_\infty\)\nb-module \(V\otimes X \cong \bigoplus_\N X\) and the rough
\(\Mat_\infty\)\nb-module \(\ihom WX \cong \prod_\N X\), with finite
matrices acting by the usual matrix--vector multiplication.

We may also take \(V_\Sch=W_\Sch=\Sch(\N)\) with the same formula
for~\(b\).  The resulting algebra is~\(\Rdk\), the algebra of rapidly
decreasing matrices.  Once again, we get a complete description of the
categories of smooth and rough \(\Rdk\)\nb-modules.  This time, the
tensor product \(V_\Sch\otimes X\) and the space \(\ihom{W_\Sch}X\) are
spaces of sequences in~\(X\) with certain growth conditions: \(V_\Sch
\otimes X\) consists of \emph{rapidly decreasing} sequences,
\(\ihom{W_\Sch}X\) of sequences of \emph{polynomial growth}.

A sequence~\((x_n)\) has rapid decay if there are a sequence of
scalars~\((\varepsilon_n)\) with rapid decay and a bounded subset
\(S\subseteq X\) with \(x_n\in \varepsilon_n \cdot S\) for all
\(n\in\N\).  A subset~\(T\) of \(V_\Sch\otimes X\) is bounded if it
has uniformly rapid decay: the same~\(\varepsilon_n\) and~\(S\)
work for all sequences in~\(T\).  This bornological vector
space of rapidly decreasing sequences is isomorphic to \(V_\Sch
\otimes X\).

A sequence~\((x_n)\) has polynomial growth if \(\{\varepsilon_n\cdot
x_n\mid n\in\N\}\) is bounded for each rapidly decreasing sequence of
scalars~\((\varepsilon_n)\).  A set~\(T\) of polynomial growth
sequences is bounded if it has uniform polynomial growth: the set
\(\bigcup_{(x_n)\in T} \{\varepsilon_n\cdot x_n\mid n\in\N\}\) is
bounded.  This bornological vector space of polynomial growth
sequences is isomorphic to \(\ihom{W_\Sch}X\).

The categories of \emph{all} \(\Mat_\infty\)\nb-modules and of
all \(\Rdk\)\nb-modules are not equivalent to the category of
complete convex bornological vector spaces: smooth and rough
modules are different for \(\Mat_\infty\) and~\(\Rdk\), while
they are the same for~\(\C\).

The left and right multiplier algebras of~\(A\) are the
roughenings of the canonical left and right module structures
on~\(A\).  Using the Morita equivalence to~\(\C\), we get
\[
\Rightm(A) \cong (\ihom VV),\qquad
\Leftm(A) \cong (\ihom WW).
\]
These obviously act on \(A\defeq V\otimes W\) on the left and
right by multiplication.  If we let~\(V\) be finite-dimensional
and~\(W\) infinite-dimensional, then the two algebras are
obviously quite different.

The \emph{multiplier algebra} (or double centraliser algebra) in this
case is
\[
\{(L,R) \in (\ihom VV)\times (\ihom WW) \mid
b\circ (\Id_W\otimes L) = b\circ (R\otimes \Id_V)\},
\]
where the multiplication uses the opposite multiplication on
\(\ihom WW\).

The natural map \(\Smooth(X) \to X\) for an \(A\)\nb-module~\(X\) need
not be monic (injective) for the algebras considered above.  Thus it
is necessary to assume approximate identities in
\cite{Meyer:Smooth}*{Lemma 4.4} even if the algebra in question is
self-induced.  For instance, take~\(X\) to be the right multiplier
algebra \(\Leftm(A) \cong \ihom WW\).  Since \(\Leftm(A) \cong
\Rough(A)\), we have \(\Smooth \Leftm(A) \cong A \defeq V\otimes W\), so that
we are dealing with the question whether the map \(V\otimes W\to\ihom
WW\) induced by~\(b\) is monic.  This fails if \(b\circ (\Id_V\otimes
f)=0\) for some map \(f\colon W_0\to W\), which is still allowed by
our rather weak non-degeneracy assumption on~\(b\).  Even if~\(b\) is
non-degenerate, say, if we work in the category of Banach spaces and
\(V=W^*\) is the dual Banach space of~\(W\), the map \(V\otimes W\to
\ihom WW\) may fail to be injective: this is related to a failure of
Grothendieck's Approximation Property for~\(W\).

\subsection{Lie group and Lie algebra representations}
\label{sec:Lie_group_algebra}

Let~\(\Cat\) be the tensor category of complete convex
bornological vector spaces.  Let~\(G\) be a connected Lie group with Lie
algebra~\(\g\).  Let \(\Ccinf(G)\) be the space of smooth,
compactly supported functions on~\(G\) with the convolution
product and the natural bornology, where a subset is bounded if
its functions are all supported in the same compact subset and
have uniformly bounded derivatives of all orders.  This is a
complete convex bornological algebra.  It is shown
in~\cite{Meyer:Smooth} that the category of smooth
group representations of~\(G\) on bornological vector spaces is
equivalent to the category of smooth \(\Ccinf(G)\)-modules
in~\(\Cat\).

Let~\(\Univ(\g)\) be the universal enveloping algebra of~\(\g\),
equipped with the fine bornology.  The category of bounded
Lie algebra representations of~\(\g\) on complete convex
bornological vector spaces is equivalent to the category of
unital \(\Univ(\g)\)-modules in~\(\Cat\).

A smooth group representation of~\(G\) may be differentiated to
a Lie algebra representation of~\(\g\), that is, to a unital
\(\Univ(\g)\)-module structure.  This provides a functor
\[
\Mod{\Ccinf(G)} \to \Mod{\Univ(\g)},
\]
the differentiation functor.  It is fully faithful if and only
if~\(G\) is connected.

The left regular representation of~\(G\) on~\(\Ccinf(G)\)
yields a Lie algebra representation of~\(\g\) on~\(\Ccinf(G)\).
If~\(V\) is a smooth representation of~\(G\) or, equivalently,
a smooth \(\Ccinf(G)\)-module, then the induced
\(\Univ(\g)\)-module structure on~\(V\) is the natural module
structure on \(V\cong \Ccinf(G)\otimes_{\Ccinf(G)} V\) induced
by the \(\Univ(\g)\)-module structure on~\(\Ccinf(G)\).  As a
consequence, the differentiation functor
\[
\diff\colon \Mod{\Ccinf(G)} \to \Mod{\Univ(\g)}
\]
is naturally isomorphic to the tensor product functor
\(V\mapsto \Ccinf(G)\otimes_{\Ccinf(G)} V\) with the canonical
\(\Univ(\g),\Ccinf(G)\)-bimodule structure on \(\Ccinf(G)\).

More explicitly, the representation of~\(\g\) on \(\Ccinf(G)\)
identifies~\(\g\) with the space of right-invariant vector
fields on~\(G\) and lets the latter act on \(\Ccinf(G)\) as
derivations.  The induced action of \(\Univ(\g)\) proceeds by
identifying \(\Univ(\g)\) with the algebra of right-invariant
differential operators on~\(G\).  Equivalently, we may identify
\(\Univ(\g)\) with the algebra of distributions on~\(G\) supported
at the identity element.  Since compactly supported
distributions on~\(G\) act on smooth functions by convolution
on the left and right, this provides left and right
\(\Univ(\g)\)-module structures on \(\Ccinf(G)\).  These commute
with the \(\Univ(\g)\)- and \(\Ccinf(G)\)-module structures on the
other side because convolution is associative.

By our general theory, the differentiation functor comes
together with three other functors.  First, it has a right
adjoint functor
\begin{multline*}
\diff^*\colon \Mod{\Univ(\g)} \to \Mod{\Ccinf(G)},\\
W\mapsto \Smooth_{\Ccinf(G)} \bigl(\ihom[\Univ(\g)]{\Ccinf(G)}W\bigr)
= \Smooth_{\Ccinf(G)} \Hom_{\Univ(\g)}(\Ccinf(G),W).
\end{multline*}
Since~\(G\) is connected, the differentiation functor is fully
faithful.  Equivalently, \(\diff^*\circ\diff(V) \cong V\) for
any smooth group representation~\(V\) of~\(G\).

Secondly, we may map smooth \(\Cinf(G)\)-modules to rough
\(\Cinf(G)\)-modules by the roughening functor, and then equip the
latter with a canonical \(\Univ(\g)\)-module structure --~rough
modules are sufficiently differentiable for such a \(\Univ(\g)\)-module
structure to exist.  We may rewrite this alternative differentiation
functor as
\begin{multline*}
  \bar\diff\colon \Mod{\Ccinf(G)} \to \Mod{\Univ(\g)},\\
  V\mapsto \Rough(W) = \Hom_{\Ccinf(G)}(\Ccinf(G),W)
  = \bigl(\ihom[\Ccinf(G)]{\Ccinf(G)}W\bigr),
\end{multline*}
where we view~\(\Ccinf(G)\) as a
\(\Ccinf(G),\Univ(\g)\)-bimodule by letting~\(\Univ(\g)\) act
by right convolution.  Finally, \(\bar\diff\) has a left
adjoint functor
\[
\bar\diff^*\colon \Mod{\Univ(\g)} \to \Mod{\Ccinf(G)},\qquad
W\mapsto \Ccinf(G) \otimes_{\Univ(\g)} W.
\]
Since~\(G\) is connected and roughening is fully faithful, the
functor~\(\bar\diff\) is fully faithful.  Equivalently,
\(\bar\diff^*\circ\bar\diff(V) \cong V\) for any smooth group
representation~\(V\) of~\(G\).

Thus the two differentiation functors \(\diff\)
and~\(\bar\diff\) from smooth representations of~\(G\) to Lie
algebra representations of~\(\g\) come together with two
integration functors \(\diff^*\) and~\(\bar\diff^*\) that map
Lie algebra representations of~\(\g\) to group representations
of~\(G\).

The integration functor~\(\diff^*\) is right adjoint to~\(\diff\).
That is, bounded \(G\)\nb-equivariant linear maps \(V\to \diff^*(W)\)
for smooth \(G\)\nb-representations~\(V\) correspond bijectively to
bounded \(\Univ(\g)\)-module homomorphisms from~\(\diff(V)\) to~\(W\).

The integration functor~\(\bar\diff^*\) is left adjoint
to~\(\bar\diff\).  That is, bounded \(G\)\nb-equivariant linear maps
\(\bar\diff^*(W)\to V\) for a smooth \(G\)\nb-representation~\(V\)
correspond bijectively to bounded \(\Univ(\g)\)-module homomorphisms
from~\(W\) to the roughening of~\(V\).

Thus our two integration functors both satisfy a universal property,
meaning that they are, in some sense, optimal ways to integrate a
\(\Univ(\g)\)-module.  The integration \(\diff^*(W)\) is the maximal
smooth \(G\)\nb-representation equipped with a \(\Univ(\g)\)-module
map \(W\to V\) in the sense that any other such~\(V\) maps
to~\(\diff^*(W)\).  And the integration \(\bar\diff^*(W)\) is the
minimal smooth \(G\)\nb-representation equipped with a
\(\Univ(\g)\)-module map \(W\to \Rough(V)\) in the sense that it maps
to any other such~\(V\).

However, these two universal properties are not compatible.  There is
usually no canonical map between \(\diff^*(W)\) and \(\bar\diff^*(W)\)
in either direction.

\begin{bibdiv}
  \begin{biblist}
\bib{Bruhat:Distributions}{article}{
  author={Bruhat, François},
  title={Distributions sur un groupe localement compact et applications \`a l'\'etude des repr\'esentations des groupes $\wp $\nobreakdash -adiques},
  date={1961},
  journal={Bull. Soc. Math. France},
  volume={89},
  language={French},
  pages={43\ndash 75},
  review={\MRref {0140941}{25\,\#4354}},
}

\bib{Cuntz-Meyer-Rosenberg}{book}{
  author={Cuntz, Joachim},
  author={Meyer, Ralf},
  author={Rosenberg, Jonathan M.},
  title={Topological and bivariant \(K\)\nobreakdash -theory},
  series={Oberwolfach Seminars},
  volume={36},
  publisher={Birkh\"auser Verlag},
  place={Basel},
  date={2007},
  pages={xii+262},
  isbn={978-3-7643-8398-5},
  review={\MRref {2340673}{2008j:19001}},
}

\bib{Gronbaek:Morita_self-induced}{article}{
  author={Gr{\o }nb{\ae }k, Niels},
  title={Morita equivalence for self-induced Banach algebras},
  journal={Houston J. Math.},
  volume={22},
  date={1996},
  number={1},
  pages={109--140},
  issn={0362-1588},
  review={\MRref {1434388}{98c:46090}},
}

\bib{Grothendieck:Produits}{book}{
  author={Grothendieck, Alexandre},
  title={Produits tensoriels topologiques et espaces nucl\'eaires},
  language={French},
  series={Mem. Amer. Math. Soc.},
  volume={16},
  date={1955},
  pages={140},
  review={\MRref {0075539}{17,763c}},
}

\bib{Joyal-Street:Braided}{article}{
  author={Joyal, Andr\'e},
  author={Street, Ross},
  title={Braided tensor categories},
  journal={Adv. Math.},
  volume={102},
  date={1993},
  number={1},
  pages={20--78},
  issn={0001-8708},
  review={\MRref {1250465}{94m:18008}},
}

\bib{MacLane:Associativity}{article}{
  author={Mac Lane, Saunders},
  title={Natural associativity and commutativity},
  journal={Rice Univ. Studies},
  volume={49},
  date={1963},
  number={4},
  pages={28--46},
  issn={0035-4996},
  review={\MRref {0170925}{30\,\#1160}},
}

\bib{Meyer:Smooth}{article}{
  author={Meyer, Ralf},
  title={Smooth group representations on bornological vector spaces},
  language={English, with English and French summaries},
  journal={Bull. Sci. Math.},
  volume={128},
  date={2004},
  number={2},
  pages={127--166},
  issn={0007-4497},
  review={\MRref {2039113}{2005c:22013}},
}

\bib{Meyer:Embed_derived}{article}{
  author={Meyer, Ralf},
  title={Embeddings of derived categories of bornological modules},
  date={2004},
  status={eprint},
  note={\arxiv {math.FA/0410596}},
}

\bib{Meyer:HLHA}{book}{
  author={Meyer, Ralf},
  title={Local and analytic cyclic homology},
  series={EMS Tracts in Mathematics},
  volume={3},
  publisher={European Mathematical Society (EMS), Z\"urich},
  date={2007},
  pages={viii+360},
  isbn={978-3-03719-039-5},
  review={\MRref {2337277}{}},
}

\bib{Pareigis:C-categories}{article}{
  author={Pareigis, Bodo},
  title={Non-additive ring and module theory. II. \(\mathcal {C}\)\nobreakdash -categories, \(\mathcal {C}\)\nobreakdash -functors and \(\mathcal {C}\)\nobreakdash -morphisms},
  journal={Publ. Math. Debrecen},
  volume={24},
  date={1977},
  number={3-4},
  pages={351--361},
  issn={0033-3883},
  review={\MRref {0498792}{58\,\#16834a}},
}

\bib{Saavedra:Tannakiennes}{book}{
  author={Saavedra Rivano, Neantro},
  title={Cat\'egories Tannakiennes},
  language={French},
  series={Lecture Notes in Mathematics},
  volume={265},
  publisher={Springer-Verlag},
  place={Berlin},
  date={1972},
  pages={ii+418},
  review={\MRref {0338002}{49\,\#2769}},
}
  \end{biblist}
\end{bibdiv}
\end{document}